\newtheorem{satz}{Theorem}
\newtheorem{proposition}[satz]{Proposition}
\newtheorem{theorem}[satz]{Theorem}
\newtheorem{corollary}[satz]{Corollary}
\newtheorem{remark}[satz]{Remark}
\def\Z{\mathbb {Z}}
\def\F{\mathbb {F}}
\def\E{\mathsf{E}}
\def\a{\alpha}
\def\C{\mathbb{C}}
\def\d{\delta}
\def\o{\omega}
\def\({\big (}
\def\){\big )}
\def\G{\Gamma}
\def\dim{{\rm dim}}
\def\le{\leqslant}
\def\ge{\geqslant}
\def\_phi{\varphi}
\def\eps{\varepsilon}
\def\Gr{{\mathbf G}}
\def\FF{\widehat}
\def\ov{\overline}
\def\la{\lambda}
\def\D{\Delta}
\def\supp{\mathsf{supp}}
\def\T{\mathsf{T}}
\def\C{\mathbb{C}}
\def\SL{{\rm SL}}
\def\GL{{\rm GL}}
\def\Aff{{\rm Aff}}
\newcommand{\bp}{\bigskip}
\author{I.D. Shkredov}
\title{Some applications of representation theory to the sum--product phenomenon
}
\date{}
\begin{document}
	\maketitle


\begin{center}
	Annotation.
\end{center}

{\it \small
    In our paper, we introduce a new method for estimating incidences via  representation theory.
    We obtain several applications to various sums with multiplicative characters and to Zaremba's conjecture from number theory. 
}
\\

\section{Introduction}

Given two finite sets $A$ and $B$ of an abelian ring, define the sumset, and the product set of $A$ and $B$ as 
\begin{equation}\label{def:A+B,AB}
    A+B=\{a+b:a\in A,\, b\in B\}, \quad \quad  A\cdot B =\{ab: a\in A,\, b\in B\} \,.
\end{equation}
The sum-product phenomena was introduced by Erd\H{o}s and Szemer\'edi in paper \cite{ES} where  they proved that for an arbitrary finite subset $A$ of integers one has 
\begin{equation}\label{f:ES}
    \max\{ |A+A|, |A\cdot A| \} \gg |A|^{1+c} \,.
\end{equation}
Here  $c>0$ is an absolute constant and 
Erd\H{o}s and Szemer\'edi conjectured that any $c<1$ is admissible, at the cost of the implicit constant. 
As a general heuristic, the conjecture suggests that either $A+A$ or $AA$ is significantly larger then the original set, unless $A$ is close to a subring.
Even more generally speaking,  the sum--product phenomenon predicts that the 
an arbitrary subset of a ring cannot have good additive and multiplicative structures simultaneously.
The interested reader may consult \cite{TV} for a rather thorough treatment of sumsets and related questions, including some prior work on the sum-product problem.
The sum-product phenomenon has been extensively studied in the last few decades, the current records as of writing being
\cite{RSt} for real numbers, and \cite{MSt}  for sufficiently small sets in finite fields.

In our paper we consider the case of the ring $\Z_q := \Z/q\Z$ and 
we have deal with 
{\it large} sets $A \subseteq \Z_q$ (basically, it means that $|A|>q^{1-\kappa}$ for a certain constant $\kappa>0$). 
In the case of a prime $q$ the behaviour of the maximum from \eqref{f:ES} is fully  known thanks to the beautiful result of Garaev \cite{Garaev_large} who used some classical exponential sums bounds in his proof. Another approach was suggested 
in 
\cite{vinh2011szemeredi} and in  \cite{MP} where some finite geometry considerations were applied. 
For example, Vinh \cite{vinh2011szemeredi} proved that for an arbitrary  prime $q$ and any two sets $\mathcal{A} \subseteq \Z_q \times \Z_q$,  $\mathcal{B} \subseteq \Z_q \times \Z_q$ one has 
\begin{equation}\label{f:Vinh}
    \left| |\{ (a_1,a_2) \in \mathcal{A},\, (b_1,b_2) \in \mathcal{B}~:~ a_1 b_1 - a_2 b_2 \equiv 1 \pmod q  \}|
    - \frac{|\mathcal{A}|\mathcal{B}|}{q} \right| \le \sqrt{q |\mathcal{A}| |\mathcal{B}|} \,.
\end{equation}
In the proof he used  the fact that equation \eqref{f:Vinh} can be interpreted as a question about points/lines incidences. 
Clearly, the result above has the sum--product flavour and indeed one can use \eqref{f:Vinh} to derive some lower bounds for the maximum from \eqref{f:ES} (in the case of  large subsets of $\Z_q$, of course).

In this paper we introduce a new method of estimating sum--product quantities as in \eqref{f:Vinh} which does not use any exponential sums, as well as any considerations from the incidence geometry. It turns out that representation theory 
makes it possible 
to obtain (almost automatically) asymptotic formulae  for the number of solutions to  systems of  equations that are preserved by the actions of certain  groups.  
For example, equation \eqref{f:Vinh} can be interpreted as the equation $\det \left( {\begin{array}{cc}
	a & b \\
	c & d \\
	\end{array} } \right) =1$, where $(a_1,a_2) \in \mathcal{A}$ and $(b_1,b_2) \in \mathcal{B}$ and hence the equation respects the usual action of $\SL_2 (\Z_q)$. 
The advantage of 
our 
approach is its generality and (relative) simplicity. 
First of all, having a certain  equation, the method makes it possible to obtain  
an asymptotic formula for the number of solutions to the equation 
for composite $q$ due to the fact that representation theory for composite $q$ is usually  not so complicated  and can be reduced to the case of prime powers.
We should mention that the question about the sum--product phenomenon for general  $\Z_q$ and large sets is considered to be difficult and there are few results in this direction, see 
\cite{thang2015erdHos} and paper 
\cite{nica2017unimodular}, where the case of finite valuation rings was considered (also, see \cite{covert2012geometric}). 
Another 
statement 
of the problem concerning the sum--product results in $\Z_q$ is contained in 
\cite{fish2018product}, \cite{gyarmati2008equationsII}, 
\cite{s_Fish}.
Let us remark that 
in \cite{fish2018product} Fish 
also uses the property  of equation invariance, but combines it with classical Fourier analysis.
Secondly, due to the obvious observation  that representation theory deals  with some facts concerning the acting group but not with sets, in all our results all the  sets involved (as $\mathcal{A}, \mathcal{B}$ in \eqref{f:Vinh}) are absolutely general and do not require to have a  special structure, for example, to be Cartesian products of some other sets. 
The last constraint  is sometimes crucial for Fourier analysis manipulations,  see, e.g.,  \cite{ahmadi2007distribution}, \cite{vinh2009distribution},  although it  usually allows to obtain better error terms in 
asymptotic formulae. 


To be more specific 
let us mention  
just one result here (see Theorem \ref{t:det_inc} of Section \ref{sec:incidence} below). 
Given  positive integers $q$,$n,m$, $d=n+m$, an element $\la \in \Z_q$ and sets $\mathcal{A} \subseteq (\Z^{d}_q)^n$, $\mathcal{B} \subseteq  (\Z^{d}_q)^m$ define by $\mathcal{D}_\la (\mathcal{A}, \mathcal{B})$ the number of solutions to the equation 
\begin{equation}\label{def:D(A,B)}
    \det(a_1,\dots, a_n, b_1,\dots, b_m) \equiv \la \pmod q \,, 
    \quad \quad (a_1,\dots, a_n) \in \mathcal{A},\, \quad  (b_1,\dots, b_m) \in \mathcal{B} \,.
\end{equation} 

\begin{theorem}
    Let $q$ be an odd  prime number and $\la \neq 0$. Then 
\begin{equation}\label{f:det_inc_intr}
    \left|\mathcal{D}_\la (\mathcal{A}, \mathcal{B}) - \frac{|\mathcal{A}| |\mathcal{B}|}{q-1} \right| \ll 
q^{d^2/2 - d/4 - 3/4} \sqrt{|\mathcal{A}| |\mathcal{B}|} 
  \,. 
\end{equation}
\label{t:det_inc_intr}
\end{theorem}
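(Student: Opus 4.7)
The plan is to recognise $\mathcal{D}_\lambda(\mathcal{A},\mathcal{B})$ as a count on the group $G := \SL_d(\Z_q)$, as suggested in the introduction. Since $\det[A|B]=\lambda\ne 0$ forces $[A|B]\in\GL_d(\Z_q)$ and $G$ acts simply transitively on $\{M\in\GL_d(\Z_q):\det M=\lambda\}$ by left multiplication, one fixes a base point $M_\lambda=[A_0|B_0]$ of determinant $\lambda$ (say $\mathrm{diag}(\lambda,1,\dots,1)$) and rewrites
\[
\mathcal{D}_\lambda(\mathcal{A},\mathcal{B}) \;=\; \sum_{g\in G} F(g)\,H(g),\qquad F(g):=\mathbf{1}_\mathcal{A}(gA_0),\ H(g):=\mathbf{1}_\mathcal{B}(gB_0).
\]
The crucial structural feature is that $F$ is right-invariant under the stabiliser $K_A:=\Stab_G(A_0)$, conjugate to the subgroup of matrices of the form $\bigl(\begin{smallmatrix}I_n&X\\0&Y\end{smallmatrix}\bigr)$ with $Y\in\SL_m(\Z_q)$ and $X$ arbitrary, which has order $q^{nm}|\SL_m(\Z_q)|$; symmetrically $H$ is right-invariant under the analogous $K_B$ of order $q^{nm}|\SL_n(\Z_q)|$.

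Next I apply the non-abelian Plancherel identity on $G$,
\[
\sum_{g\in G} F(g)\,H(g) \;=\; \frac{1}{|G|}\sum_{\pi\in\widehat G} d_\pi\,\tr\bigl(\widehat F(\pi)\widehat H(\pi)^*\bigr),
\]
and isolate the trivial representation. Since $\sum_g F(g)=|\mathcal{A}^\circ|\cdot|K_A|$, where $\mathcal{A}^\circ\subseteq\mathcal{A}$ denotes the subset of full-column-rank elements (the $G$-orbit of $A_0$), and analogously for $H$, the trivial representation contributes $|\mathcal{A}^\circ||\mathcal{B}^\circ||K_A||K_B|/|G|$; evaluating with $|\SL_k(\Z_q)|=q^{k^2-1}(1+O(q^{-1}))$ this reduces to $|\mathcal{A}||\mathcal{B}|/(q-1)$, and the corrections from $|\mathcal{A}\setminus\mathcal{A}^\circ|$ and the lower order terms in $|\SL_k|$ are absorbed into the error.

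The heart of the proof is controlling the error $\mathcal{E}:=\frac{1}{|G|}\sum_{\pi\ne 1}d_\pi\,\tr(\widehat F(\pi)\widehat H(\pi)^*)$. The decisive input is that right-$K_A$-invariance forces $\widehat F(\pi)$ to factor through the $K_A$-fixed subspace of $\pi$, so $\operatorname{rank}\widehat F(\pi)\le\dim V_\pi^{K_A}$, and similarly for $\widehat H(\pi)$. Combining this rank restriction with Cauchy--Schwarz, the Plancherel identity $\sum_\pi d_\pi\|\widehat F(\pi)\|_{HS}^2=|G|\,\|F\|_2^2$, and the standard quasi-randomness lower bound $\min_{\pi\ne 1}d_\pi\gtrsim q^{d-1}$ for non-trivial irreducible representations of $\SL_d(\Z_q)$, one should be able to improve the naive bound $|\mathcal{E}|\ll q^{d^2/2-1}\sqrt{|\mathcal{A}||\mathcal{B}|}$ by a factor of $q^{(d-1)/4}$ to reach the claimed $q^{d^2/2-d/4-3/4}\sqrt{|\mathcal{A}||\mathcal{B}|}$. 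The main obstacle is this last optimisation: the rank bounds from the two parabolic stabilisers must be balanced against the minimal irrep dimension so as to extract precisely the fractional exponents $d/4+3/4$, which plausibly calls for a spherical/parabolic-induction analysis of the Gelfand pair $(G,K_A)$ rather than naive Cauchy--Schwarz.
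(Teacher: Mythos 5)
Your reduction of $\mathcal{D}_\la(\mathcal{A},\mathcal{B})$ to $\sum_{g\in G}F(g)H(g)$ via the simply transitive action of $G=\SL_d(\Z_q)$ on $\{\det=\la\}$ is correct, and your framework (Plancherel on $G$, trivial representation for the main term, quasirandomness $d_\pi\gg q^{d-1}$ for the error) is essentially a Fourier--dual rewriting of what the paper does with the singular value decomposition of the incidence matrix $M(a,b)$. But your proposal stops exactly where the proof has to start: you concede that you do not know how to improve the naive bound $q^{d^2/2-1}\sqrt{|\mathcal{A}||\mathcal{B}|}$ by the factor $q^{(d-1)/4}$, and you speculate that a spherical/parabolic-induction analysis of the Gelfand pair $(G,K_A)$ is needed. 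That improvement is the entire content of the theorem, so as written there is a genuine gap. Moreover, the missing ingredient is not representation--theoretic: the paper obtains the gain from a \emph{fourth-moment} (rectangular norm) count. Writing $\la_1\ge\la_2\ge\dots$ for the singular values of $M$, one has
\[
\sum_j\la_j^4=\sum_{a,a'}\Big|\sum_bM(a,b)M(a',b)\Big|^2\le q^{2nd}\cdot q^{2md-4}=q^{2d^2-4},
\]
because for fixed $a,a'$ the system $\det(a|b)=\det(a'|b)=\la$ reduces, after fixing $b_1,\dots,b_{m-1}$, to the pair of linear equations already analysed in the proof of Theorem \ref{t:inc_Z_q} and so has at most $q^{md-2}$ solutions in $b$. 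Since the $\SL_d$--invariance forces every nontrivial singular value to occur with multiplicity at least the minimal dimension $\ge 2^{-2}q^{d-1}$ of a nontrivial irreducible representation (this is the only representation theory used), one gets $\la_2\le 2q^{(2d^2-4)/4-(d-1)/4}=2q^{d^2/2-d/4-3/4}$, which is precisely the stated exponent. Your rank bound $\mathrm{rank}\,\widehat F(\pi)\le\dim V_\pi^{K_A}$ plays no role in this and cannot substitute for the counting input.

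Two further remarks. First, your own framework, executed with only the second moment, already gives $\la_2\le\big(\sum_j\la_j^2/\min_{\pi\neq1}d_\pi\big)^{1/2}\ll q^{(d^2-d)/2}$ since $\sum_j\la_j^2=|\SL_d(\Z_q)|\le q^{d^2-1}$; this is at least as strong as $q^{d^2/2-d/4-3/4}$ once $d\ge3$, so only $d=2$ (i.e.\ $n=m=1$, which is Theorem \ref{t:inc_Z_q}) genuinely requires the fourth moment. Second, your main term is not $|\mathcal{A}||\mathcal{B}|/(q-1)$ on the nose: the trivial representation contributes $|\mathcal{A}||\mathcal{B}|\,|G|\cdot(|G|/|K_A|)^{-1}(|G|/|K_B|)^{-1}\cdot|G|^{-1}=\frac{|\mathcal{A}||\mathcal{B}|}{q}(1+O(q^{-2}))$, and the resulting discrepancy of order $|\mathcal{A}||\mathcal{B}|/q^2$ from $|\mathcal{A}||\mathcal{B}|/(q-1)$ is \emph{not} dominated by $q^{d^2/2-d/4-3/4}\sqrt{|\mathcal{A}||\mathcal{B}|}$ for near-full sets once $d\ge6$; so the claim that these corrections are ``absorbed into the error'' needs an actual justification rather than an assertion.
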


In Section \ref{sec:applications} we obtain further applications of our approach to some problems of number theory. 
Our main observation is that the representation theory of $\SL_2 (\Z_q)$ 
makes it easy 
to insert {\it multiplicative} characters into all formulae with incidences and, therefore, to obtain non--trivial estimates for the  corresponding exponential sums. In the author opinion this  is a rather interesting phenomenon due to the widely--known fact that results with multiplicative characters are usually very difficult to obtain. 
As an example, we formulate the following theorem concerning summation over a hyperbolic surface. 
Denote by $\mathcal{D} \subset \C$ the unit disk.

\begin{theorem}
    Let $q$ be a prime number, $\d>0$ be a real number, $A,B, X,Y\subseteq \Z_q$ be sets, let $\chi$ be a non--principal multiplicative character and  $|X||Y| \ge q^{\d}$. 
    Also, let 
    $c_A : A \to \mathcal{D}$, $c_B : B \to \mathcal{D}$  
    be some weights. 
    Then there is $\eps(\d)>0$ such that 
\[
     \sum_{a\in A,\, b\in B,\, x\in X,\, y\in Y ~:~ (a+x)(b+y)=1} c_A (a) c_B (b)  \chi(a+x) 
    \le \sqrt{|A||B|} (|X||Y|)^{1-\eps(\d)}
     \,.
\]
\label{t:chi_hyp+}
\end{theorem}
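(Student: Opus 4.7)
The strategy is to convert the character sum into a twisted incidence problem on the hyperbola $uv=1$ and to handle it by the representation-theoretic machinery used in Theorem \ref{t:det_inc_intr}, with the principal series of $\SL_2(\Z_q)$ attached to $\chi$ playing the role of the trivial representation; the saving $\eps(\d)$ will be extracted from the assumption $|X||Y|\ge q^{\d}$ by an iterated Cauchy--Schwarz amplification.

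First, substitute $u=a+x$ and $v=b+y$ so that the hyperbolic constraint becomes $v=u^{-1}$, and rewrite the sum as
\[
S\;=\;\sum_{u\in\Z_q^*}\chi(u)\,F(u)\,G(u^{-1}),\qquad
F(u)=\sum_{\substack{a\in A,\,x\in X\\ a+x=u}}c_A(a),\quad
G(v)=\sum_{\substack{b\in B,\,y\in Y\\ b+y=v}}c_B(b).
\]
A Cauchy--Schwarz in the $a$-variable, followed by expansion of the modulus squared and the identity $\chi(a+x)\overline{\chi(a+x')}=\chi((b'+y')/(b+y))$ on the support set, produces
\[
|S|^2\;\le\;|A|\sum_{s,t\in\Z_q^*}\chi(t/s)\,G(s)\,\overline{G(t)}\,\phi(t^{-1}-s^{-1}),
\]
where $\phi(r)=\#\{(x,x')\in X\times X:x'-x=r\}$. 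Crucially, the multiplicative character has survived the Cauchy--Schwarz step in the form $\chi(t/s)$, which is a character of the diagonal torus $T=\{\mathrm{diag}(s,s^{-1})\}\subset\SL_2(\Z_q)$.

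The right-hand side is exactly the sort of twisted incidence sum that the representation theory of $\SL_2(\Z_q)$ is built to estimate. In Theorem \ref{t:det_inc_intr} the trivial representation contributes the main term $|\mathcal{A}||\mathcal{B}|/(q-1)$, while the non-trivial irreducible components yield the square-root error term. Running the same spectral decomposition argument with the twist $\chi(t/s)$ inserted amounts to projecting onto the principal series $\pi_\chi$ rather than onto the trivial representation; since $\chi$ is non-principal the would-be main term vanishes and every irreducible component contributes only to the error, giving one step of genuine square-root cancellation. Iterating the Cauchy--Schwarz amplification a bounded (in terms of $\d$) number of times converts this one-step saving into the claimed factor $(|X||Y|)^{1-\eps(\d)}$, and the outer $|A|$ combines with the final $|B|$ contribution into the prefactor $\sqrt{|A||B|}$; the hypothesis $|X||Y|\ge q^{\d}$ is used precisely to keep the error from dominating at each iteration.

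The principal obstacle is the middle step: proving a $\chi$-twisted analogue of Theorem \ref{t:det_inc_intr} and checking that it retains square-root cancellation uniformly in the non-trivial character $\chi$. Concretely this requires bounds on the matrix coefficients of the principal series $\pi_\chi$ evaluated on the convolution operators built from $X$ and $Y$, together with careful accounting of the Cauchy--Schwarz iteration so that the final saving depends only on $\d$ and not on the individual sizes of $X$ and $Y$.
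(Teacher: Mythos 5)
Your opening reduction (substituting $u=a+x$, $v=b+y$ and one Cauchy--Schwarz in the $a$--variable) is fine, and your instinct that the character $\chi$ should be carried by the principal--series model of $\SL_2 (\Z_q)$ is exactly the idea the paper uses in Proposition \ref{p:chi_inc}, where one passes to functions satisfying $f(\la x,\la y)=\chi(\la)f(x,y)$ so that $\chi$ rides along inside an ordinary incidence count. The gap is in the engine you propose for the cancellation. A spectral decomposition of the (twisted or untwisted) incidence operator only ever yields square--root cancellation \emph{relative to $q$}: the error term has the shape $q^{\theta}\sqrt{|\cdot|\,|\cdot|}$, as in Theorems \ref{t:det_inc} and \ref{t:cross-ratio}. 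When $|X||Y|=q^{\d}$ with $\d$ small, such a bound is far weaker than the trivial estimate, and no bounded number of Cauchy--Schwarz iterations can convert a $q^{-c}$--type saving into a $(|X||Y|)^{-\eps}$ saving: Cauchy--Schwarz amplification preserves the relative quality of the input bound, it does not manufacture a saving that scales with the small sets $X,Y$. This is precisely why the hypothesis reads $|X||Y|\ge q^{\d}$ rather than something of the form $|X||Y|\ge q^{1+\d}$.

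The missing ingredient is the energy $\T_{2k}(f_G)$ of the family of M\"obius transformations $G=\{g_{x,y}\}$, $g_{x,y}\colon a\mapsto -y+1/(a+x)$, which parametrizes the hyperbola $(a+x)(b+y)=1$. The paper's Proposition \ref{p:chi_inc} bounds the twisted sum by $\sqrt{|A||B||G|}\,\T^{1/8k}_{2k}(f_G)$ plus a secondary term (via a H\"older argument in the homogeneous model, not via an eigenvalue decomposition), and the power saving $(|X||Y|)^{-\eps(\d)}$ in Corollary \ref{c:chi_hyp} then comes from the known non--trivial bounds on $\T_{2k}(f_G)$ for such families --- a growth--in--$\SL_2(\F_p)$ / Bourgain--Gamburd input from \cite{sh_Kloosterman} --- valid as soon as $|G|=|X||Y|\ge q^{\d}$. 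In other words, representation theory supplies only the bookkeeping that keeps $\chi$ in the incidence count; the cancellation itself is a growth phenomenon, not a spectral--gap phenomenon. The step you flag as the ``principal obstacle'' (uniform matrix--coefficient bounds for $\pi_\chi$) is therefore not the right missing lemma, and even if established it would not close the quantitative gap for small $X,Y$.
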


Another application of the approach allows us to generalize \cite[Theorem 4]{sh_Kloosterman} (also, see Theorem 33 from this paper). Let $\chi$ be a non--principal multiplicative character over a finite field $\F$.
Consider the Kloosterman sum twisted by the character $\chi$, namely, 
\[
K_\chi (n,m) = \sum_{x \in \F \setminus \{0\}} \chi(x) e( nx + mx^{-1}) \,,
\]
where $e(\cdot)$ is an additive character on $\F$. 
We are interested in bilinear forms of Kloosterman sums (motivation can be found, say, in \cite{KMS_Kloosterman_gen}, \cite{sh_Kloosterman}, \cite{Shparlinski_Kloosterman})
that is, the sums of the form 
\[
S_\chi (\a,\beta) = \sum_{n,m} \a(n) \beta (m) K_\chi (n,m) \,,
\] 
where $\a : \F \to \C$, $\beta : \F \to \C$ are arbitrary  functions.

\begin{theorem}
    Let $c>0$, $\chi$ be a non--principal multiplicative character  and  $q$ be a prime number. 
	Let $t_1, t_2 \in \Z_p$, $N,M$ be integers, $N,M \le q^{1-c}$ 
	and let 
	$\a,\beta  : \Z_q \to \C$ be functions supported on $\{1,\dots, N\} +t_1$ and $\{1,\dots, M\} +t_2$, respectively.  
	Then there exists $\d = \delta (c) >0$ 
 such that  
	\begin{equation}\label{f:d-est_intr+}
	S_\chi (\a,\beta ) \lesssim 
	\| \a \|_2 \| \beta \|_2  q^{1-\d}  \,.  
	\end{equation}
 	Besides, if  
	$M^2 <pN$, then
	\begin{equation}\label{f:Kloosterman_NM_1_intr}
	S_\chi (\{1,\dots, N\}+t_1,\beta) \lesssim \| \beta \|_2 \left(N^{3/7} M^{1/7} p^{13/14} +  N^{3/4} p^{3/4} + N^{1/4} p^{13/12} \right) \,.
	\end{equation}
\label{t:chi_Kloosterman_intr}
\end{theorem}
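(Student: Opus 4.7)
The plan is to open up the Kloosterman sum and reduce \eqref{f:d-est_intr+} via Cauchy--Schwarz to a $\chi$-twisted character sum over a hyperbolic surface, so that Theorem \ref{t:chi_hyp+} provides the required power saving. Writing
\[
S_\chi(\a,\b) \;=\; \sum_{x\ne 0} \chi(x)\, \h\a(x)\, \h\b(x^{-1}),
\qquad \h\a(x) := \sum_n \a(n) e(nx),
\]
I would apply Cauchy--Schwarz in the $n$-variable to obtain
\[
|S_\chi(\a,\b)|^2 \;\le\; \|\a\|_2^2 \cdot \sum_{n\in \{1,\dots,N\}+t_1} \Bigl| \sum_{m,x} \b(m)\chi(x)\, e(nx + m/x) \Bigr|^2.
\]
Squaring out the inner sum, the $n$-summation produces a smoothed Dirichlet kernel $K_N(x_1-x_2)$; the diagonal $x_1 = x_2$ is handled by Parseval and is already bounded by $qN\|\b\|_2^2 \le q^{2-2\d}\|\b\|_2^2$ under the hypothesis $N \le q^{1-c}$.

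The off-diagonal is the essential step. After writing the character as $\chi(x_1 x_2^{-1})$ and recognising the inner sum over $m_1,m_2$ as an autocorrelation of $\h\b$ evaluated at $x_1^{-1}$ and $x_2^{-1}$, I would perform a change of variables $x_1 = a+x$, $x_2^{-1} = b+y$ (with suitable shifts) so that the dyadically fixed constraint $x_1-x_2 = w$ translates into an equation of the hyperbolic form $(a+x)(b+y) = 1$ modulated by a controlled error, with weighted sets $A,B,X,Y$ arising naturally from the autocorrelation support. For each dyadic value of $|w|$, an application of Theorem \ref{t:chi_hyp+} (its density hypothesis $|X||Y|\ge q^{\d'}$ being verified by a short Parseval calculation using $M\le q^{1-c}$) yields a saving of $q^{-\eps(\d')}$ over the trivial bound. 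Summing the dyadic contributions and taking square roots then proves \eqref{f:d-est_intr+} with $\d(c) = \eps(\d')/2$.

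For the sharper bound \eqref{f:Kloosterman_NM_1_intr}, the indicator structure of $\a = \mathbf{1}_{\{1,\dots,N\}+t_1}$ permits completion of the $n$-sum by Polya--Vinogradov: we expand the interval against its Fourier transform and split according to the size of $|x_1-x_2|$ into three regimes, which are estimated respectively by the Weil bound on complete Kloosterman sums (giving the term $N^{3/4}q^{3/4}$), by the hyperbolic character estimate of Theorem \ref{t:chi_hyp+} (giving $N^{3/7}M^{1/7}q^{13/14}$), and by direct completion combined with Parseval (giving $N^{1/4}q^{13/12}$); the relation $M^2<qN$ is precisely what guarantees that the balancing between these regimes is consistent. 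The principal obstacle throughout is ensuring that after the Cauchy--Schwarz step the weighted character sum is \emph{exactly} of the hyperbolic form governed by Theorem \ref{t:chi_hyp+}: the multiplicative character $\chi$ is coupled with the additive phase $e(nx+m/x)$ through the inversion $x\mapsto x^{-1}$, and this coupling must be respected by the $\SL_2(\Z_q)$-action underlying the representation-theoretic input. Once that alignment is arranged, the remainder reduces to standard Parseval and dyadic-block manipulations.
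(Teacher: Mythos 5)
Your plan coincides with the paper's own route: the paper deduces Theorem \ref{t:chi_Kloosterman_intr} from the $\chi$-twisted hyperbolic incidence bound (Theorem \ref{t:chi_hyp+}, via Corollary \ref{c:chi_hyp}) by repeating the Cauchy--Schwarz and completion argument of \cite[Theorems 33, 34]{sh_Kloosterman}, which is precisely the reduction you sketch, with the diagonal handled by Parseval and the off-diagonal recast as the equation $(a+x)(b+y)=1$. You have also correctly identified the only genuinely new ingredient, namely the insertion of the multiplicative character into the incidence count, which is what removes the third term of the untwisted estimate.
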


Our bounds 
\eqref{f:d-est_intr+}, \eqref{f:Kloosterman_NM_1_intr} (also, see much more general Corollary \ref{c:chi_Kloosterman} below)
are non--trivial and do not seem to be covered by   \cite{KMS_Kloosterman_gen}, \cite{Shparlinski_Kloosterman} results, which require some additional restrictions on $\chi$.

Finally, we obtain an application to Zaremba's 
conjecture 
\cite{zaremba1972methode}. 
Recall the main  result of  \cite{MMS_Korobov}.

\begin{theorem}
    Let $q$ be a positive 
    sufficiently large 
    integer 
    with sufficiently large prime factors. 
    Then there is a positive integer $a$, $(a,q)=1$ and 
\begin{equation}\label{f:main_M}
        M= O(\log q/\log \log q)
\end{equation}
    such that 
\begin{equation}\label{f:ZM_expansion}
    \frac{a}{q} = [0;c_1,\dots,c_s] = 
    \cfrac{1}{c_1 +\cfrac{1}{c_2 +\cfrac{1}{c_3+\cdots +\cfrac{1}{c_s}}}}
    \,, \quad \quad c_j \le M\,, \quad \quad  \forall j\in [s]\,.
\end{equation}
    Also, if $q$ is a sufficiently large square--free number, then \eqref{f:main_M}, \eqref{f:ZM_expansion} take place.\\ 
    Finally, if $q=p^n$, $p$ is an arbitrary prime, then  \eqref{f:main_M}, \eqref{f:ZM_expansion}  hold for sufficiently large $n$. 
\label{t:ZM}
\end{theorem}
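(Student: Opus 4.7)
The plan is to reformulate Zaremba's statement modulo $q$ as a covering/equidistribution question inside $\SL_2(\Z_q)$ and then attack it with the representation-theoretic machinery of the paper. If $a/q = [0;c_1,\ldots,c_s]$ with $1 \le c_j \le M$, then $a$ and $q$ appear as entries of the matrix product $T_{c_1}\cdots T_{c_s}$, where $T_c = \left(\begin{smallmatrix} c & 1 \\ 1 & 0 \end{smallmatrix}\right)$. So, writing $\Gamma_M = \{T_c : 1\le c \le M\}$, it suffices to show that the image of $\Gamma_M^s$ in $\SL_2(\Z_q)$ reaches every prescribed bottom row $(q,a)$ with $(a,q)=1$, for some $s$ with $s \log M \ll \log q$.

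First I would reduce to prime-power moduli via CRT. Since $\SL_2(\Z_q) \cong \prod_i \SL_2(\Z_{p_i^{n_i}})$, equidistribution in each factor glues to equidistribution in $\SL_2(\Z_q)$; this isolates the three regimes in the statement — large prime factors (prime $p$), square-free $q$ (assembly of primes), and $q=p^n$ with $n$ large. In each factor $G = \SL_2(\Z_{p^n})$, set $f = \mathbf{1}_{\Gamma_M}$ and expand the $s$-fold convolution by Plancherel,
\[
 f^{*s}(g) = \frac{M^s}{|G|} + \sum_{\pi \neq 1} d_\pi \tr\bigl(\widehat f(\pi)^s \pi(g)^{*}\bigr).
\]
The main term $M^s/|G|$ is the target density; the job is to dominate the non-trivial $\pi$ contribution.

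The key input, which is exactly where the paper's philosophy enters, is a uniform spectral gap $\|\widehat f(\pi)\|_{\mathrm{op}} \le M^{1-\eta}$ across all non-trivial irreducible $\pi$. This would be proved in the same spirit as Theorem \ref{t:det_inc_intr}: the norm squared is a trace sum
$\sum_{c,c' \le M} \tr \pi(T_c T_{c'}^{-1})$,
and such trace sums are handled by the representation theory of $\SL_2(\Z_q)$, reducing via CRT and tower-of-congruence-subgroups arguments to the prime case, where the irreps have dimension $\sim p$ and classical character-sum bounds apply. Having $\|\widehat f(\pi)\|_{\mathrm{op}} \le M^{1-\eta}$, standard Cauchy–Schwarz on the Plancherel expansion gives error $\le M^{s(1-\eta)} |G|^{1/2}$, which is dominated by $M^s/|G|$ once $s\eta \log M \gtrsim \tfrac{3}{2}\log q$. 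Balancing this with the trivial constraint $M^s \ge |G|$ and taking $M$ as small as possible yields $M = O(\log q/\log\log q)$, matching \eqref{f:main_M}; from equidistribution in $G$, specializing the bottom row gives an $a$ with $(a,q)=1$ satisfying \eqref{f:ZM_expansion}.

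The main obstacle is the uniform spectral gap in the prime-power case $q = p^n$ with small $p$: representations factoring through a level-$k$ congruence quotient can have artificially large Fourier mass on $\Gamma_M$ whenever $\Gamma_M^s$ concentrates on a proper subgroup modulo $p^k$. Circumventing this requires a non-concentration estimate for continued-fraction products in each congruence level, which is precisely the quantitative content forcing the hypothesis ``$n$ sufficiently large'' in the third assertion, and ``sufficiently large prime factors'' in the first. All other steps — CRT gluing, the Plancherel expansion, and the length optimization — are, by contrast, relatively routine once the spectral gap is in hand.
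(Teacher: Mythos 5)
Your reduction breaks down at the very first step, before any representation theory enters. Writing $a/q=[0;c_1,\dots,c_s]$ forces the \emph{integer} continuant $K(c_1,\dots,c_s)$ (the top--left entry of $T_{c_1}\cdots T_{c_s}$) to equal $q$ exactly, whereas arranging that the product lies in a prescribed coset modulo $q$ only makes this continuant some multiple $kq$ with no control on $k$; the fraction you produce then has denominator $kq$, not $q$, and for words of the length $s\sim \log|\SL_2(\Z_q)|/\log M$ that your Plancherel count requires, $k$ is typically a power of $q$. The missing idea is the arithmetic gluing step that the paper actually imports from \cite{MMS_Korobov} (and recalls in the proof of Theorem \ref{t:ZM_new}): one builds the set $Z_M$ of continuants with partial quotients at most $M$ (a union of shifts of an interval, Ahlfors--David regular of dimension $w_M=1-O(1/M)$), uses the mirror identity for continued fractions to reduce the theorem to the nonemptiness of $Z_M\cap Z_M^{-1}$ in $\Z_q$, and then establishes the latter by counting solutions of $zz'\equiv 1 \pmod q$ with $z,z'\in Z_M$ via the Bourgain--Gamburd machine for the maps $x\mapsto 1/(a+x)-b$ --- exactly the hyperbola count of Proposition \ref{p:chi_inc} and Corollary \ref{c:chi_hyp} with trivial $\chi$. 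No equidistribution of $\Gamma_M^s$ over all of $\SL_2(\Z_q)$ is needed, and none is proved.

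The second gap is the asserted uniform single--step spectral bound $\|\widehat{f}(\pi)\|_{\mathrm{op}}\le M^{1-\eta}$ for $f$ the indicator of $\{T_1,\dots,T_M\}$. The quantity $\sum_{c,c'\le M}\tr\,\pi(T_c T_{c'}^{-1})$ that you propose to estimate is the squared Hilbert--Schmidt norm of $\widehat{f}(\pi)$, whose diagonal alone contributes $M d_\pi$; after dividing by $d_\pi$ this gives $\|\widehat{f}(\pi)\|_{\mathrm{op}}\gtrsim \sqrt{M}$ from below and nothing nontrivial from above, so no character--sum bound can deliver the claimed estimate along this route. More fundamentally, for a generating set of size $M=O(\log q/\log\log q)$ no one--step operator--norm gap is known, and the quasirandomness arguments of this paper cannot supply one: they give lower bounds on the multiplicities of eigenvalues of invariant kernels, not upper bounds on $\|\widehat{f}(\pi)\|_{\mathrm{op}}$ for a tiny generating measure. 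The available substitute is the multi--step $\ell^2$--flattening of $f^{*s}$ in the sense of Bourgain--Gamburd, which needs non--concentration on subgroups and Helfgott--type product theorems in $\SL_2(\Z_{p^n})$, the high multiplicity of nontrivial irreducibles entering only at the final Sarnak--Xue step. You flag this as a secondary obstacle for small $p$, but it is the entire analytic content of the theorem.
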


Using an idea from representation theory, one can generalize  Theorem \ref{t:ZM}.

\begin{theorem}
    Let $q$ be a 
    sufficiently large prime number and 
    $\Gamma \le \Z_q$ 
    be a multiplicative subgroup, 
\begin{equation}\label{c:Gamma}
    |\G| \gg \frac{q}{\log^\kappa q} \,,
\end{equation}
    where $\kappa>0$ is an absolute constant. 
    Then there is 
    $a\in \Gamma$ and 
\begin{equation}\label{c:M}
        M= O(\log q/\log \log q)
\end{equation}
    such that 
\begin{equation*}\label{f:main_expansion}
    \frac{a}{q} = [0;c_1,\dots,c_s] \,, \quad \quad c_j \le M\,, \quad \quad  \forall j\in [s]\,.
\end{equation*}
\label{t:ZM_new}
\end{theorem}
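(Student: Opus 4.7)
The strategy is to follow the proof of Theorem~\ref{t:ZM} (the MMS--Korobov argument) and insert the indicator of the multiplicative subgroup $\Gamma$ via orthogonality of multiplicative characters, bounding the non--principal contributions through the character--sum machinery developed earlier in this paper.

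First I would translate the problem into the matrix language. A continued fraction $[0;c_1,\dots,c_s]$ with $c_j\le M$ corresponds to a product of matrices $\prod_j\begin{pmatrix} c_j & 1\\ 1 & 0\end{pmatrix}$, whose bottom--right entry is the lifted denominator and whose top--right entry is the lifted numerator. After reducing modulo $q$ and handling the lifting step as in \cite{MMS_Korobov}, the number of $a\in\Gamma$ admitting such an expansion reduces to a quantity of the shape
\[
N(\Gamma) \;=\; \sum_{a\in\Gamma} W(a)\,,
\]
where $W(a)$ is a non--negative weight counting words in $\SL_2(\Z_q)$ with a prescribed entry equal to $a$ and factors drawn from the alphabet $\{1,\dots,M\}$. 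Using orthogonality,
\[
\mathbf{1}_\Gamma(a) \;=\; \frac{|\Gamma|}{q-1}\sum_{\chi\,:\,\chi|_\Gamma\equiv 1}\chi(a)\,,
\]
the principal character $\chi_0$ reproduces the MMS--Korobov count, scaled by $|\Gamma|/(q-1)$, which under \eqref{c:Gamma} is still large enough to yield the desired conclusion provided the remaining (at most $\ll\log^\kappa q$) non--principal characters contribute negligibly.

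For each non--principal $\chi$ trivial on $\Gamma$ one must estimate a twisted sum
\[
T_\chi \;=\; \sum_{g\in\SL_2(\Z_q)} W'(g)\,\chi\bigl(\pi(g)\bigr)\,,
\]
where $\pi(g)$ extracts the relevant matrix entry and $W'$ is the weight coming from the $M$--bounded factorisation. This is precisely the setting targeted by the main technical results of the paper: a multiplicative character evaluated at a coordinate of a matrix whose defining relation $\det g = 1$ is $\SL_2$--invariant. Splitting the word of length $s\asymp \log q/\log M$ into two halves and applying the hyperbolic--surface bound of Theorem~\ref{t:chi_hyp+} (and, where useful, the bilinear Kloosterman--type estimate of Theorem~\ref{t:chi_Kloosterman_intr}) to the two blocks should yield a power--of--$q$ saving $|T_\chi|\le q^{-\eps}\cdot(\text{trivial bound})$ for some $\eps>0$ depending only on $\kappa$. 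Summing over the $\ll\log^\kappa q$ non--principal characters then gives a total error that is absorbed into the main term.

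The main obstacle, as is customary for thin--set problems of this kind, is propagating cancellation through a matrix word whose alphabet $\{1,\dots,M\}$ is small compared with $q$, while keeping the saving uniform in $\chi$. Concretely, after the splitting step one obtains two ``sides'' $\mathcal{A},\mathcal{B}\subseteq\SL_2(\Z_q)$ built from $M^{s/2}$ matrix products, and one must verify that these satisfy the size hypothesis $|\mathcal{A}||\mathcal{B}|\ge q^\d$ needed to invoke Theorem~\ref{t:chi_hyp+}; this is essentially the expansion input from \cite{MMS_Korobov}. Balancing the expansion step (which produces the main term) against the multiplicative twist, and checking that the power saving $q^{-\eps}$ indeed beats $\log^{-\kappa}q$, is the delicate part, but once these two inputs are combined the conclusion follows.
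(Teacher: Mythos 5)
Your proposal follows essentially the same route as the paper: take the MMS--Korobov construction for the main term, detect membership in $\Gamma$ by orthogonality over the $\ll\log^{\kappa}q$ multiplicative characters trivial on $\Gamma$, and beat the resulting $\log^{-\kappa}q$ loss with a power-of-$q$ saving in the twisted character sums coming from the hyperbola machinery (Theorem~\ref{t:chi_hyp+}/Corollary~\ref{c:chi_hyp}). The only real difference is packaging: where you phrase the error term as a twisted sum over split matrix words, the paper routes it through the fact that the admissible numerators contain a set $Z_M\cap Z_M^{-1}$ with $Z_M$ a union of shifted intervals of length $N\sim q^{2\eps}$, and then invokes Theorem~\ref{t:chi_interval} (whose proof supplies the smoothing and Bourgain--Gamburd input your sketch leaves at the level of ``should yield''), finishing with the same check $N\ge q^{\epsilon/c_*}$ that you flag as the delicate balancing step.
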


Some results of this type
concerning 
restrictions of the numerators of fractions  \eqref{f:ZM_expansion} to multiplicative subgroups were obtained in 
\cite{chang2011partial}, 
\cite{moshchevitin2007sets} and \cite{ushanov2010larcher}.

\bp 

We thank Nikolai Vavilov and Igor Shparlinski 
for  useful discussions and references. 

\section{Definitions and preliminaries}

Let $\Gr$ be  a group (commutative or not) and $A,B$ be some subsets of $\Gr$. 
The sumset (and the product set) of $A$ and $B$ was defined in \eqref{def:A+B,AB}. Let us write $A  \dotplus B$ if for finite sets $A$, $B$ one has $|A+B| = |A||B|$. 
We  use a representation function notation such as  $r_{AB} (x)$ or $r_{AB^{-1}} (x)$, which counts the number of ways $x \in \Gr$ can be expressed as the product $ab$ or  $ab^{-1}$ with $a\in A$, $b\in B$, respectively. 
For example, $|A| = r_{AA^{-1}}(1)$.
Let us write $r^{(k)}_{A}$ for $r_{A\dots A}$, where the set $A$ is taken $k$ times. 
Having real functions $f_1,\dots, f_{2k} :\Gr \to \C$
(let $k$ be an even number for concreteness), we put 
\[
	\T_{k} (f_1,\dots, f_{2k}) 
	=
	\sum_{a_1 a^{-1}_2  \dots  a_{k-1} a^{-1}_k = a_{k+1} a^{-1}_{k+2} \dots a_{2k-1} a^{-1}_{2k}} f_1(a_1) \dots f_{2k} (a_{2k}) \,.
\]
We denote the Fourier transform of a function  $f : \Gr \to \mathbb{C}$ by~$\FF{f},$ namely, 
\begin{equation}\label{F:Fourier}
\FF{f}(\xi) =  \sum_{x \in \Gr} f(x) \xi(x) \,,
\end{equation}
where 
$\xi$
is an additive character on $\Gr$. 
In this paper we use the same letter to denote a set $A\subseteq \Gr$ and  its characteristic function $A: \Gr \to \{0,1 \}$. 
Finally, if $|\Gr| < \infty$, then we consider the balanced function $f_A$ of $A$, namely, $f_A (x) := A(x) - |A|/|\Gr|$.

In this  paper we 
have deal with 
the group $\SL_2 (\Z_q) \le \GL_2 (\Z_q)$  of matrices 
\[
g=
\left( {\begin{array}{cc}
	a & b \\
	c & d \\
	\end{array} } \right) = (ab|cd) 
 = (a,b|c,d) 
 \,, \quad \quad a,b,c,d\in \Z_q \,, \quad \quad \det(g) = ad-bc=1 \,,
\] 
which acts on the project line (in the case of a prime number $q$) via the formula $gx =\frac{ax+b}{cx+d}$ and naturally acting on $\Z_q \times \Z_q$ for an arbitrary $q$. 

Now we give a simplified version of the special case of \cite[Theorem 6]{klingenberg1961orthogonal} 
(also, see 
\cite[Theorem 3]{james1973structure}). 
Let $p$ be a  prime number, $d$ be a positive integer, $V(\Z_{p^d})$ be a vector space over $\Z_{p^d}$, $\dim\, V (\Z_{p^d}) = n$ on which a non--degenerate symmetric bilinear form $\Phi (\cdot, \cdot)$ is given. 
The group of isometries of $V$ is called the {\it orthogonal group of} $V(\Z_{p^d})$, $O_n (\Z_{p^d})$ and the subgroup of isometries with determinant one is called the {\it special orthogonal group of} $V(\Z_{p^d})$, $SO_n (\Z_{p^d})$. 

\begin{theorem}
    Let $p$ be a  prime number, $p\ge 5$, $d$ be a positive integer,
    $V(\Z_{p^d})$ be a vector space over $\Z_{p^d}$, 
    and $\Phi(x_1,\dots,x_n; y_1,\dots, y_n) = x_1 y_1 + \dots + x_n y_n$ defined on $V(\Z_{p^d}) \times V(\Z_{p^d})$. 
    Suppose that $\Gamma$ is a normal subgroup of $SO_n (\Z_{p^d})$, where $n\ge 3$, $n\neq 4$. 
    Then $\Gamma$ is a congruence subgroup  with  the quotient isomorphic to $SO_n ( \Z_{p^r})$, $r< d$. 
\label{t:Klingenberg_O}
\end{theorem}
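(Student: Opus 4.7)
The plan is to show that every normal subgroup $\Gamma$ of $SO_n(\Z_{p^d})$ coincides with one of the principal congruence subgroups $\Gamma_r := \ker\bigl(SO_n(\Z_{p^d}) \to SO_n(\Z_{p^r})\bigr)$ for some $0 \le r \le d$. I would let $r$ be the largest integer for which $\Gamma \subseteq \Gamma_r$ and aim to prove $\Gamma = \Gamma_r$; the quotient is then $SO_n(\Z_{p^r})$ with $r < d$, which is the claimed statement.

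First I would analyze the successive quotients of the filtration. For $g \in \Gamma_r$, write $g = I + p^r X$ with $X \in M_n(\Z_{p^{d-r}})$; the orthogonality relation $g^T g = I$ taken modulo $p^{r+1}$ forces $X + X^T \equiv 0 \pmod{p}$, so $X \bmod p$ lies in the Lie algebra $\mathfrak{so}_n(\F_p)$ of antisymmetric matrices. Multiplication reduces to addition modulo higher order terms, namely $(I + p^r X)(I + p^r Y) \equiv I + p^r(X + Y) \pmod{p^{r+1}}$, so the map $g \mapsto X \bmod p$ gives an abelian group isomorphism $\Gamma_r/\Gamma_{r+1} \cong \mathfrak{so}_n(\F_p)$. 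Conjugation by $h \in SO_n(\Z_{p^d})$ descends to $X \mapsto \bar h X \bar h^{-1}$, where $\bar h$ is the reduction modulo $p$, so the group action on each quotient is the adjoint representation of $SO_n(\F_p)$ on $\mathfrak{so}_n(\F_p)$.

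Next I would use normality. The image of $\Gamma$ in $\Gamma_r/\Gamma_{r+1}$ is an $SO_n(\F_p)$-submodule, and by maximality of $r$ it is nonzero. Here the hypotheses enter decisively: for $n \ge 3$, $n \ne 4$ and $p \ge 5$, the adjoint representation of $SO_n(\F_p)$ on $\mathfrak{so}_n(\F_p)$ is absolutely irreducible; the case $n = 4$ is excluded because $\mathfrak{so}_4 \cong \mathfrak{sl}_2 \oplus \mathfrak{sl}_2$ decomposes, and small characteristic can produce spurious invariants via the Casimir or Killing form. Irreducibility forces the image of $\Gamma$ at level $r$ to be all of $\mathfrak{so}_n(\F_p)$, giving $\Gamma \cdot \Gamma_{r+1} = \Gamma_r$.

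Finally I would climb the filtration using commutators. A direct expansion gives $[I + p^r X,\, I + p^s Y] \equiv I + p^{r+s}(XY - YX) \pmod{p^{r+s+1}}$. Under the stated hypotheses $\mathfrak{so}_n(\F_p)$ is a perfect Lie algebra, so the brackets $[X,Y]$ span it; taking commutators of elements of $\Gamma \cap \Gamma_r$ with suitable elements of $\Gamma_1$ produces elements of $\Gamma \cap \Gamma_{r+1}$ realizing any prescribed antisymmetric matrix modulo $p^{r+2}$. Induction on the level then yields $\Gamma_{r+k} \subseteq \Gamma$ for every $k \ge 1$, and combined with the surjectivity at level $r$ gives $\Gamma = \Gamma_r$. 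The main obstacle is the Lie-theoretic bookkeeping at the boundary: certifying that the adjoint representation is irreducible and that $\mathfrak{so}_n(\F_p)$ is perfect even in small characteristic. This is precisely where one must exclude $p \le 3$ and $n = 4$, and is the reason the Klingenberg--James proof requires a careful case analysis rather than a uniform Lie-algebraic invocation.
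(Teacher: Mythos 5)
Your route is genuinely different from the paper's. The paper does not prove this statement from first principles: it derives it from Klingenberg's classification of normal subgroups of orthogonal groups over local rings (Theorem 6 of the cited Klingenberg paper, supplemented by James), the only work being to verify the hypotheses there --- triviality of the relevant center and the existence of a nonzero isotropic vector for $x_1^2+\dots+x_n^2$ modulo $p$, lifted by Hensel. Your filtration argument is the standard self-contained alternative, and for the levels $r\ge 1$ it is essentially sound: the graded pieces $\Gamma_r/\Gamma_{r+1}$ are copies of $\mathfrak{so}_n(\F_p)$, conjugation induces the adjoint action, and irreducibility plus perfectness (valid for $p\ge 5$, $n\ge 3$, $n\ne 4$) let you fill one level and climb by commutators. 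What this buys is independence from the classification literature; what it costs is the Lie-theoretic bookkeeping you acknowledge, plus the issue below.

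The genuine gap is at the bottom of the filtration. If $\Gamma\not\subseteq\Gamma_1$, so that your maximal $r$ equals $0$, the quotient $\Gamma_0/\Gamma_1\cong SO_n(\F_p)$ is a group, not a Lie algebra, and ``nonzero invariant submodule implies everything'' has no analogue there: $SO_n(\F_p)$ has proper nontrivial normal subgroups, namely the spinor-norm kernel $\Omega_n(\F_p)$ of index $2$ and, for even $n$, the center $\{\pm I\}$. Concretely, $\Omega_n(\Z_{p^d})$ and (for even $n$) $\{\pm I\}$ are normal subgroups of $SO_n(\Z_{p^d})$ that are not principal congruence subgroups, so the clean conclusion $\Gamma=\Gamma_r$ cannot be reached by your argument --- nor by any other, since it is false as literally stated; Klingenberg's actual theorem only sandwiches $\Gamma$ between $\Omega_n(J)$ and the preimage of the center of the quotient, and this is the ``simplification'' the paper is glossing over (its parenthetical claim that the center is trivial also fails for even $n$). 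The repair is to treat level $0$ separately via the simplicity of $\Omega_n(\F_p)$ modulo its center and accept a bounded-index ambiguity, which is harmless for the paper's application to dimensions of non-faithful irreducible representations. Two smaller points you should certify rather than assert: surjectivity of $\Gamma_r/\Gamma_{r+1}\to\mathfrak{so}_n(\F_p)$ needs a lifting argument (Cayley transform or Hensel, fine for odd $p$), and the irreducibility of $\Lambda^2$ of the natural module in characteristic $p\ge 5$ should be quoted from the modular representation literature.
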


Indeed,  in \cite[Theorem 6]{klingenberg1961orthogonal} it requires to calculate the center of $SO_n (\Z_{p^d})$, which is trivial as one can easily check (or consult \cite[Lemma 1]{james1973structure} for general $\Phi$). 
Further one needs to find an isotropic vector $x=(x_1,\dots,x_n) \neq 0$ such that $\Phi(x,x) = 0$ and this is an obvious task to do as the equation $x^2_1+ x_2^2 + x_3^2 \equiv 0 \pmod p$ has a nonzero solution (and hence a solution modulo $p^d$ by Hensel's lemma).
Finally, notice  that in the case $n=4$ one can in principle use  \cite[Remark 2]{james1973structure} (in \cite[Theorem 6]{klingenberg1961orthogonal} the author considers the case $n=4$ under some additional assumptions which exclude the case of the sum of two hyperbolic planes).

Basic facts of representation theory can be found in \cite{kirillov2012elements}. Recall that a representation $\rho$ of a group $\Gr$ 
is called {\it faithful} if it is injective.
%
%
%
We need some number--theoretic functions. Given a positive integer $n$ we write $\tau (n)$ for the number of all divisors of $n$ and by $\omega (n)$ denote the number of all prime divisors. 
Also, denote by $J_k (n) = n^k \prod_{p|n} (1-p^{-k})$  the Jordan  totient function 
equals the number of 
$k$--tuples of positive integers that are less than or equal to $n$ and that together with $n$ form a coprime set of $k+1$ integers. For example, it is easy to see that $|\SL_2 (\Z_q)| = q J_2 (q)$.

The signs $\ll$ and $\gg$ are the usual Vinogradov symbols.
When the constants in the signs  depend on a parameter $M$, we write $\ll_M$ and $\gg_M$. 
If $a\ll_M b$ and $b\ll_M a$, then we write $a\sim_M b$. 
All logarithms are to base $2$.
We write $\Z_q = \Z/q\Z$ and let $\Z^*_q$ be the group of all invertible elements of $\Z_q$. 
By $\F_p$ denote $\F_p = \Z/p\Z$ for a prime $p$. 
Finally, let us denote by $[n]$ the set $\{1,2,\dots, n\}$.

\section{Applications to incidence problems}  
\label{sec:incidence}

We start with the simplest question about points/hyperplanes incidences (see equation \eqref{def:I(A,B)} below).
This problem was considered before in 
\cite{vinh2015product}, \cite{the2022dot}, where the authors obtained better asymptotic formulae for the quantity $\mathcal{I}_\la (\mathcal{A}, \mathcal{B})$ using other approaches. 
We commence with equation \eqref{def:I(A,B)} because it allows us to transparently  demonstrate our method, and because 
we will use some of the calculations from the proof  below. 
As we will see the  proof of Theorem \ref{t:inc_Z_q} exploits some facts about representation theory of $SO_n (\Z_q)$, which preserves the distance $x_1^2 + \dots +x_n^2$ in $\Z^n_q$. 
Thus, our approach is  applicable in principle  to all distance problems, for example, to the well--known  
Erd\H{o}s--Falconer distance problem  
see, e.g.,  \cite{iosevich2007erdos}.

Given  positive integers $q, n\ge 2$, an element $\la \in \Z_q$ and sets $\mathcal{A} \subseteq \Z^n_q$, $\mathcal{B} \subseteq \Z^n_q$  consisting of tuples all coprime to $q$,  
define by $\mathcal{I}_\la (\mathcal{A}, \mathcal{B})$ the number of solutions to the equation 
\begin{equation}\label{def:I(A,B)}
    a_1 b_1 + \dots +a_n b_n \equiv \la \pmod q \,. 
\end{equation}



\begin{theorem}
    Let $q, n\ge 2$ be positive integers, $\mathcal{A} \subseteq \Z^n_q$, $\mathcal{B} \subseteq \Z^n_q$ be sets and $\la \in \Z^*_q$. 
    Let $m$ be the least prime divisor of $q$ and suppose that $m\ge 5$. 
    Then 
\begin{equation}\label{f:inc_Z_q,n=2}
    \left|\mathcal{I}_\la (\mathcal{A}, \mathcal{B}) - \frac{|\mathcal{A}| |\mathcal{B}|}{q \prod_{p|q} (1-p^{-n})} \right| \le 2  
    q^{n-1} \sqrt{|\mathcal{A}| |\mathcal{B}|} \cdot (\Theta (n) m^{-n_*} )^{1/4} 
    \,,
\end{equation}
    where $n_* = 1$ for $n=2,3$ and $n_* = n-3$ for $n\ge 4$
    and further, 
    $\Theta (2) \ll \min \{ \tau (q), \log_m q\}$, 
    $\Theta (3) \ll \min \{ \log \omega (q), 1 + \omega(q)/m \}$ and $\Theta (n) \ll 1$ for $n\ge 4$. 
\label{t:inc_Z_q}
\end{theorem}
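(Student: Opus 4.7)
The plan is to interpret $\mathcal{I}_\la(\mathcal{A},\mathcal{B})$ as the matrix coefficient of an $SO_n(\Z_q)$-equivariant operator on $\ell^2(\mathcal{V})$, where $\mathcal{V}\subseteq\Z_q^n$ is the set of tuples coprime to $q$, and then to bound its operator norm on the orthogonal complement of the constants using the representation theory of $SO_n(\Z_q)$. Concretely, set $\Phi(x,y):=x_1y_1+\cdots+x_ny_n$ and define the self-adjoint operator
\[
(Tf)(a)=\sum_{b\in\mathcal{V},\;\Phi(a,b)=\la} f(b),
\]
so that $\mathcal{I}_\la(\mathcal{A},\mathcal{B})=\langle\mathcal{A},T\mathcal{B}\rangle_{\ell^2(\mathcal{V})}$. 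Because $\Phi$ is $SO_n$-invariant, $T$ commutes with the diagonal $SO_n(\Z_q)$-action.

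The main term is extracted via the identity $T\,1_{\mathcal{V}}=q^{n-1}\,1_{\mathcal{V}}$: for any coprime $a$ the hyperplane $\{\Phi(a,\cdot)=\la\}$ in $\Z_q^n$ has $q^{n-1}$ points, and since $\la\in\Z_q^*$, none of these points lies in $p\Z_q^n$ for any prime $p\mid q$, so all are coprime. Writing $\mathcal{A}=(|\mathcal{A}|/|\mathcal{V}|)1_{\mathcal{V}}+f_A$ and similarly for $\mathcal{B}$, the diagonal piece equals $|\mathcal{A}||\mathcal{B}|q^{n-1}/|\mathcal{V}|$, matching the stated main term via $|\mathcal{V}|=q^n\prod_{p|q}(1-p^{-n})$. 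Cauchy--Schwarz then reduces the theorem to proving $\|T|_W\|_{\mathrm{op}}\ll q^{n-1}(\Theta(n)m^{-n_*})^{1/4}$ on $W:=\{1_{\mathcal{V}}\}^\perp$.

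The key inequality is
\[
d_{\min}\cdot\|T|_W\|_{\mathrm{op}}^{4}\ \le\ \tr(T^{4}|_W),
\]
which holds because, $T$ being self-adjoint and $SO_n(\Z_q)$-equivariant, on each $\rho$-isotypic its eigenvalues appear with multiplicity at least $\dim\rho$; here $d_{\min}$ is the minimum dimension of a nontrivial $SO_n(\Z_q)$-irrep occurring in $W$. A direct enumeration of quadruples $(a,b,c,c')\in\mathcal{V}^4$ with $\Phi(a,c)=\Phi(a,c')=\Phi(b,c)=\Phi(b,c')=\la$ (two affine conditions on $a$, two on $b$, for generic $c,c'$) yields $\tr(T^{4})\ll q^{4(n-1)}$ after the trivial-isotypic contribution is removed, and the fourth root supplies the desired shape $q^{n-1}d_{\min}^{-1/4}$ of the error.

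The main obstacle is the lower bound $d_{\min}\gg \Theta(n)^{-1}m^{n_*}$. Factor $q=\prod p^{d_p}$; by CRT, $SO_n(\Z_q)\cong\prod SO_n(\Z_{p^{d_p}})$, so every irreducible of the product is an exterior tensor product of irreducibles at each prime. Theorem~\ref{t:Klingenberg_O} (Klingenberg) asserts that any proper normal subgroup of $SO_n(\Z_{p^d})$ is a congruence subgroup (for $p\ge 5$, $n\ge 3$, $n\ne 4$), so any nontrivial irrep factors through some $SO_n(\Z_{p^r})$ with $r\le d$; combined with the classical lower bounds on the minimal dimension of a nontrivial irrep of $SO_n(\mathbb{F}_p)$ (of order $p^{n-3}$ for $n\ge 5$ and $p$ for $n=3$) this produces the claimed $n_*$. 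The special cases $n=2$ (where $SO_2$ is essentially abelian and forces the $\tau(q)$-loss in $\Theta(2)$) and $n=4$ (excluded by Theorem~\ref{t:Klingenberg_O}, recovered through \cite[Remark~2]{james1973structure}) demand separate attention, as does the non-constant $SO_n$-invariant part of $W$ consisting of functions of $\Phi(a,a)$ alone: this $\sim q$-dimensional radial subspace is controlled either by a spherical-function analysis for the Gelfand pair $(SO_n,SO_{n-1})$ or by further slicing $\mathcal{A},\mathcal{B}$ along the level sets of $\Phi$, and is responsible for the logarithmic factors appearing in $\Theta(3)$.
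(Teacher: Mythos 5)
Your overall strategy --- spectral decomposition of the incidence operator, main term from the constant vector on the coprime tuples, a fourth--moment bound on $\mathrm{tr}(T^4)$, and a multiplicity lower bound coming from the representation theory of the invariance group --- is exactly the paper's. However, the two quantitative claims you assert are precisely where the work lies, and as stated they do not go through. The bound $\mathrm{tr}(T^4)\ll q^{4(n-1)}$ is not justified (and for composite $q$ is not what one can prove): the ``generic $c,c'$'' enumeration ignores the degenerate pairs. Already the diagonal $a=a'$ contributes $q^{n}\cdot q^{2(n-1)}=q^{3n-2}$, which for $n=2$ is of the same order $q^{4(n-1)}$, and the near--diagonal pairs --- those $(a,a')$ for which every $2\times 2$ determinant of $(1,a_1,\dots,a_n\,|\,1,a'_1,\dots,a'_n)$ is divisible by a large divisor of $q$ --- inflate the count to $q^{4n-4}\Theta(n)$. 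This divisor sum over tuples $(r_1,\dots,r_t)$, $q=p_1^{\omega_1}\cdots p_t^{\omega_t}$, is exactly where $\Theta(2)\ll\tau(q)$ and $\Theta(3)\ll\log\omega(q)$ come from in the paper. Your attempt to instead produce $\Theta(n)$ from the multiplicity bound, via $d_{\min}\gg\Theta(n)^{-1}m^{n_*}$, is a misattribution: the representation--theoretic input is purely $d_{\min}\gg m^{n_*}$, and $\Theta$ has nothing to do with dimensions of irreducibles.

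The second genuine gap is $n=2$. The argument cannot be run with $SO_2(\Z_q)$: that group is abelian, so $d_{\min}=1$ and the multiplicity step yields no saving at all --- no ``$\tau(q)$--loss'' rescues this. The paper's trick is to change variables $a_2\mapsto -a_2$ so that the form becomes $\det(a|b)$, invariant under $\SL_2(\Z_q)$, and then to use Frobenius' bound $(p-1)/2$ together with the lower bound for faithful irreducibles of $\SL_2(\Z_{p^d})$; this is what gives $m^{-n_*}=m^{-1}$ for $n=2$ (and similarly the paper passes to $\SL_2\times\SL_2$ for $n=4$ rather than $SO_4$). Finally, you are right to isolate the radial ($SO_n$--invariant but non--constant) part of $W$, on which the multiplicity argument is silent --- the paper dismisses this in one line, so flagging it is fair --- but your proposal does not actually control this subspace either (neither of the two suggested fixes is carried out), and it is not the source of the logarithm in $\Theta(3)$.
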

\begin{proof}
    Let $q=p^{\o_1}_1 \dots  p^{\o_t}_t$, where $m=p_1<p_2<\dots < p_t$ are primes and $\o_j$ are positive integers. 
    Also, let $a=(a_1,\dots,a_n)$, $b=(b_1,\dots,b_n)$ and let $M(a,b) = 1$ iff the pair $(a,b)$ satisfies our equation \eqref{def:I(A,B)}.  
    Considering the unitary decomposition of the hermitian matrix $M(a,b)$, we obtain 
\begin{equation}\label{f:unitary_decomposition}
    M(a,b) = \sum_{j=1}^{q^n} \mu_j u_j (a) \ov{u}_j (b) \,,
\end{equation}
    where $\mu_1 \ge \mu_2 \ge \dots $ are the eigenvalues and $u_j$ are correspondent orthonormal eigenfunctions.  
    Clearly, 
\[
    \mathcal{I}_\la (\mathcal{A}, \mathcal{B}) = \sum_{a\in \mathcal{A}, b\in \mathcal{B}} M(a,b) = \sum_{j=1}^{q^n} \mu_j \langle \mathcal{A}, u_j \rangle \ov{\langle \mathcal{B}, u_j \rangle} \,.
\]
    Let $N=J_n (q)$. 
    By the definition of the Jordan totient function the number of vectors $a = (a_1,\dots, a_n)$ such that $a_1, \dots, a_n, q$ are coprime is exactly $N$. 
    It is easy to see that $\mu_1 = q^{n-1}$ and $u_1 (x) = N^{-1/2} (1,\dots,1)\in \mathbb{R}^{N}$.
    Indeed, we fix $b$ and thanks to the Chinese remainder theorem we need to we solve linear equation  \eqref{def:I(A,B)} modulo $p^{\o_j}_j$, $j\in [t]$.
    Since $\la \in \Z^*_q$ and hence $\la \in \Z^*_{p^{\o_j}_j}$ for all $j\in [t]$, it follows that not all coefficients of \eqref{def:I(A,B)} are divided by $p_j$ and hence there are $p^{\o_j(n-1)}_j$ solutions modulo $p^{\o_j}_j$. Hence there are $q^{n-1}$ solutions in total.  
    Thus 
    we obtain 
\begin{equation}\label{f:E_bound-}
    \mathcal{I}_\la (\mathcal{A}, \mathcal{B}) - \frac{q^{n-1}|\mathcal{A}| |\mathcal{B}|}{N} 
    = 
    \sum_{j=2}^{q^n} \mu_j \langle \mathcal{A}, u_j \rangle \ov{\langle \mathcal{B}, u_j \rangle} := \mathcal{E} \,.
\end{equation}
    By the orthonormality of $u_j$ and the H\"older inequality, we get  
\begin{equation}\label{f:E_bound}
    |\mathcal{E}| \le |\mu_2| \sqrt{|\mathcal{A}| |\mathcal{B}|} \,.
\end{equation}
    Thus it remains 
    to estimate 
    the second eigenvalue $\mu_2$
    and to do this we calculate the rectangular norm of the matrix $M$, that is 
\[
    \sum_{j=1}^{q^n} |\mu_j|^4 = \sum_{a,a'} \left| \sum_{b} M(a,b) M(a',b) \right|^2 := \sigma \,,
\]
    and then $|\mu_2|$. 
    Fixing a pair  $(a,a') \in \Z^n_q \times \Z^n_q$, we need to solve the system of two linear equations 
\begin{equation}\label{f:eq_two}
    a_1 b_1 + \dots + a_n b_n \equiv \lambda \pmod q\,, \quad \quad 
    a'_1 b_1 + \dots + a'_n b_n \equiv \lambda \pmod q \,.
\end{equation} 
    It implies, in particular, that 
\begin{equation}\label{tmp:26.01_1}
    \sum_{j=2}^n b_j (a'_1 a_j - a_1 a'_j) \equiv \lambda (a_1' - a_1) \pmod q\,,
\end{equation}
    and if $a_1' - a_1 \in \Z_q^*$, say, then we obtain $q^{n-2}$ solutions by the previous argument. 
    If not, then consider all possible determinants of $2\times 2$ matrices consisting of the elements of the matrix $(1,a_1,\dots, a_n | 1,a'_1,\dots, a'_n)$. 
    Further given a tuple $(r_1,\dots, r_n)$, where $0\le r_j \le \o_j$ we consider the set $\mathcal{A}(r_1,\dots,r_n)$ of pairs 
    $(a,a') \in \Z^n_q \times \Z^n_q$
    such that $p^{r_j}_j$ is the maximal divisor of all these  determinants.
    If $(a,a') \in \mathcal{A}(r_1,\dots,r_n)$, then $a_j \equiv a'_j \pmod {p^{r_j}_j}$ and hence 
\begin{equation}\label{tmp:26.01_2}
    |\mathcal{A}(r_1,\dots,r_n)| \le \frac{q^{2n}}{\prod_{j=1}^t p^{r_j n}_j}  \,.
\end{equation}
    To solve \eqref{tmp:26.01_1} (recall that we consider the case when $(a,a') \in \mathcal{A}(r_1,\dots,r_n)$) 
    one can use 
    the Chinese remainder theorem again and we see that there are 
    $$
        \prod_{j=1}^t p^{\o_j (n-2) + r_j}_j = q^{n-2} \prod_{j=1}^t p^{r_j}_j 
    $$
    solutions to equation \eqref{tmp:26.01_1}. 
    Combining the last bound with \eqref{tmp:26.01_2}, we obtain 
\[
    \sigma \le q^{4n-4} \sum_{r_1 \le \o_1,\dots,r_t \le \o_t}\,  \prod_{j=1}^t p_j^{-r_j(n-2)}
    = 
    q^{4n-4} \Theta(n) \,,
\]
    where $\Theta (n) = O(1)$ for $n\ge 4$, $\Theta (3) = O(\log t)$ and $\Theta (2) \ll \prod_{j=1}^t (1+\o_j) = \tau (q)$. 
    Let us remark other bounds for $\Theta (2)$ and for $\Theta (3)$, namely, from $m^{\tau (q)} \le q$ one has $\Theta (2) \ll \tau (q) \ll \log_m q$ and, clearly, $\Theta (3) \ll 1 + t/m$.

    It is instructive to consider the case $n=2$ separately. 
    Redefining the set $\mathcal{A}$, we need to solve the equation 
\begin{equation}\label{eq:n=2}
    a_1 b_1 - a_2 b_2 \equiv \la \pmod q \,, \quad (a_1,a_2) \in \mathcal{A},\, \quad (b_1,b_2) \in \mathcal{B} \,,
\end{equation}
    where $a=(a_1,a_2)$ and $b=(b_1,b_2)$.
    It is clear that our equation \eqref{eq:n=2} has the form $\det(a|b) \equiv \la \pmod q$ and hence we enjoy the following invariance property 
\begin{equation}\label{f:invariance}
    M(a,b) = M(ga, gb) \,, \quad \quad \forall g\in \SL_2 (\Z_q) \,.
\end{equation}
    Hence if $f$ is an eigenfunction of $M$ with the eigenvalue $\mu$, then for $f^g (x) := f(gx)$ one has 
\[
    \sum_{a,b} M(a,b) f^g(b) = \sum_{a,b} M(a,b) f(gb) =
    \sum_{a,b} M(ga,gb) f(gb) = \mu f(ga) = \mu f^g (a) \,,
\]
    where we have used \eqref{f:invariance} and the  transitivity of the natural action of $\SL_2 (\Z_q)$. 
    In other words, $\SL_2 (\Z_q)$ preserves the eigenspace $L_\mu$, which 
    corresponds 
    to $\mu$. 
    Now consider an arbitrary eigenfunction $u_j$, $j>1$. 
    We know that $\sum_x u_j (x) =0$ and hence $u_j$ is not a constant function. 
    There are many ways 
    to see that $\dim (L_{\mu_j})>1$ or, in other words, 
    that $\langle \{ u^g_j \}_{g\in \SL_2 (\Z_q)} \rangle \neq \langle u_j \rangle = L_{\mu_j}$.
    For example, 
    one can use the transitivity again. 
    Another approach is to notice that  the group $\SL_2 (\Z_q)$ has no non--trivial one--dimensional representations but, on the other hand, any one--dimensional eigenspace would give us a character 
(the same holds in the general case which  will be considered below).
    Thus anyway  we conclude 
    that for any $j>1$ the multiplicity of each $\mu_j$ is at least the minimal dimension of  non--trivial representations of $\SL_2 (\Z_q)$.

    Now  we essentially  repeat the argument from \cite[Theorem 12]{s_Fish}. \
    Another way is to use the first part of \cite[Theorem 1]{bardestani2015quasi} which says exactly the same. 
    So, let us 
    repeat what is known about 
    representation theory  of the group $\SL_2 (\Z_q)$, see  \cite[Sections 7, 8]{bourgain2008expansion}. 
    First of all, for any irreducible representation $\rho_q$ of $\SL_2 (\Z_q)$ we have $\rho = \rho_q = \rho_{p^{\rho_1}_1} \otimes \dots \otimes \rho_{p^{\rho_t}_t}$ and hence it is sufficient to understand  representation theory for $\SL_2 (\Z_{p^d})$, where $p$ is a prime number and $d$ is  a positive integer.  
    Now by \cite[Lemma 7.1]{bourgain2008expansion} we know that for any odd prime the dimension of any faithful irreducible representation of $\SL_2 (\Z_{p^d})$ is at least $2^{-1} p^{d-2} (p-1)(p+1)$ (a similar proof for $\SL_n (\Z_{p^d})$, $n\ge 2$ can be found in \cite[Theorem 1]{bardestani2015quasi}). 
    If $d=1$, then the classical result of Frobenius \cite{frobenius1896gruppencharaktere} says that the minimal dimension of any non--trivial representation is at least $(p-1)/2$. 
    For an arbitrary positive integer $r\le d$ we can consider the natural projection $\pi_r : \SL_2 (\Z_{p^d}) \to \SL_2 (\Z_{p^r})$ and let $H_r = \mathrm{Ker\,} \pi_r$. One can show that the set $\{ H_r \}_{r\le d}$ gives all normal subgroups of $\SL_2 (\Z_{p^d})$ and hence any nonfaithful irreducible representation arises as a faithful irreducible representation of $\SL_2 (\Z_{p^r})$ for a certain $r<d$.  Anyway, we see that  the multiplicity (dimension) $d_\rho$ of any non--trivial irreducible representation $\rho$ of  $\SL_2 (\Z_{p^d})$ is at least $(m-1)/2 \ge m/3$.

    Returning to the quantity $\sigma$, 
    we get 
    (below  $n=2$) 
\[
    |\mu_2|^4 m \le 3 \sigma \le 3 q^{4n-4} \Theta(n)
\]
    and hence 
\begin{equation}\label{f:mu2}
|\mu_2| \le \left( 3 m^{-1} q^{4n-4} \Theta(n) \right)^{1/4} 
=
    q^{n-1} (3 \Theta(n) m^{-1} )^{1/4}
\end{equation} 
    Recalling \eqref{f:E_bound-}, \eqref{f:E_bound}, we obtain the required result for $n=2$.

    Now let $n>2$. It remains only to find  a good  lower bound for the multiplicity of $\mu_j$, $j>1$ (the fact $\dim (L_{\mu_j}) > 1$ is immediate consequence that $SO_n (\Z_q)$ has no  non--trivial one--dimensional representations or thus see paper  \cite{nica2017unimodular}).
    In the higher--dimensional case $n>2$ our form $\Phi(a,b) = a_1 b_1 + \dots + a_n b_n$ is preserved by the group of orthogonal transformations $O_n (\Z_q)$ (as well as $SO_n (\Z_q)$) and hence our task is to find a good  lower bound 
    for the dimension of any non--trivial irreducible representation  of  $SO_n (\Z_{p^d})$. 
    Using Theorem \ref{t:Klingenberg_O} and the arguments as above, we see that it is enough to have deal with faithful representations and this problem was solved in \cite{bardestani2017faithful}. 
    The authors prove that the minimal dimension of any faithful representations coincides (up to constants) with the classical lower bound for minimal dimension of an arbitrary non--trivial representation for 
    split Chevalley groups over $\F_{p^d}$, see  \cite{landazuri1974minimal}, \cite{seitz1993minimal}. 
    These results combining  with the existence of isomorphisms between low--dimensional classical groups (see \cite[Proposition 2.9.1]{KL_subgroups_book}, for example) give us 
    $d_\rho \ge  2^{-2} p^{n-3}$ for $n\ge 4$ and $d_\rho \ge  2^{-2} p$ for $n=3$. 
    For $n=4$ one cannot apply Theorem \ref{t:Klingenberg_O} but it is easy to see that in this case the multiplicity of $\mu_2$ is at least $d_\rho \ge  2^{-1} (p^{}-1)$ due to the fact that the group $\SL_2 (\Z_{p^d}) \times \SL_2 (\Z_{p^d})$ acts on the quadruples $(a_1,\dots,a_4)$ and we can use previous arguments concerning $\SL_2 (\Z_{p^d})$ and the case $n=2$. 
    It follows that 
    for any $n\ge 2$ the multiplicity of $\mu_2$ is at least $\Omega(m^{-n_*})$.
%
%
    This completes the proof. 
$\hfill\Box$
\end{proof}

\bp 

Thus, as the reader can see, our method almost automatically gives some asymptotic formulae  for the number of solutions to  systems of  equations that are preserved by the actions of certain  groups.  
The only thing we need to calculate is the first eigenfunction  of the correspondent operator and its rectangular norm. 
After that we use quasi--random technique in the spirit of 
papers     \cite{Gamburd_PhD}, \cite{Huxley_quasi} and \cite{SX}.

\bigskip

Now we are ready to obtain Theorem \ref{t:det_inc_intr} from the introduction and for simplicity we consider the case of a prime number $q$. We assume that the sets $\mathcal{A}$, $\mathcal{B}$ consisting of linearly independent tuples  because otherwise there is no solutions to equation \eqref{def:D(A,B)}. 

\begin{theorem}
    Let $q$ be an odd  prime number and $\la \neq 0$. Then 
\begin{equation}\label{f:det_inc}
    2^{-3} \left|\mathcal{D}_\la (\mathcal{A}, \mathcal{B}) - \frac{|\mathcal{A}| |\mathcal{B}|}{q} \right| \le 
q^{d^2/2 - d/4 - 3/4} \sqrt{|\mathcal{A}| |\mathcal{B}|} 
    + 
    \frac{|\mathcal{A}| |\mathcal{B}|}{q^{2}}
  \,. 
\end{equation}
\label{t:det_inc}
\end{theorem}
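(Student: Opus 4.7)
The plan is to adapt the proof of Theorem \ref{t:inc_Z_q} by replacing the bilinear form $a\cdot b=a_1b_1+\dots+a_nb_n$ with the determinantal pairing $(a,b)\mapsto\det(a|b)$, and correspondingly the orthogonal group $SO_n(\Z_q)$ with $\SL_d(\Z_q)=\SL_{n+m}(\Z_q)$, which acts diagonally on $\mathcal{A}$ and $\mathcal{B}$ and preserves the equation \eqref{def:D(A,B)} via $\det(ga|gb)=\det(g)\det(a|b)=\det(a|b)$. Since $\la\neq 0$ forces the columns $a_1,\dots,a_n,b_1,\dots,b_m$ to be linearly independent, I would restrict the indicator matrix $M(a,b):=\mathbf{1}[\det(a|b)=\la]$ to $\mathcal{X}_n\times\mathcal{X}_m$, where $\mathcal{X}_k\subseteq(\Z_q^d)^k$ denotes the set of $k$-tuples of linearly independent vectors, and likewise intersect $\mathcal{A},\mathcal{B}$ with these sets. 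As in \eqref{f:unitary_decomposition}, I then pass to the singular value decomposition $M(a,b)=\sum_{j\ge 1}\mu_j u_j(a)\ov{v}_j(b)$, so that $\mathcal{D}_\la(\mathcal{A},\mathcal{B})=\sum_j\mu_j\langle\mathcal{A},u_j\rangle\ov{\langle\mathcal{B},v_j\rangle}$.

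The key structural input is that $\SL_d(\Z_q)$ is transitive on $\mathcal{X}_n$ for $n<d$ (and on $\mathcal{X}_m$), so the trivial representation appears exactly once in each of $\C^{\mathcal{X}_n},\C^{\mathcal{X}_m}$; hence $u_1=\mathbf{1}_{\mathcal{X}_n}/\sqrt{|\mathcal{X}_n|}$, $v_1=\mathbf{1}_{\mathcal{X}_m}/\sqrt{|\mathcal{X}_m|}$, and the equivariance of $M$ gives $\mu_1=|\GL_d(\Z_q)|/((q-1)\sqrt{|\mathcal{X}_n||\mathcal{X}_m|})$ by a direct count of $d\times d$ matrices with prescribed determinant. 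A short computation then shows that $\mu_1\langle\mathcal{A},u_1\rangle\ov{\langle\mathcal{B},v_1\rangle}$ differs from $|\mathcal{A}||\mathcal{B}|/q$ by at most $O(|\mathcal{A}||\mathcal{B}|/q^2)$, which accounts for the second term on the right of \eqref{f:det_inc}. By Cauchy--Schwarz the remaining contribution from the non-trivial isotypic components is bounded by $\mu_2\sqrt{|\mathcal{A}||\mathcal{B}|}$, where $\mu_2$ is the largest singular value of $M$ restricted to the orthogonal complement of the trivial representation.

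To estimate $\mu_2$ I would compute the rectangular norm
\[
\sigma:=\sum_{j\ge 1}\mu_j^4=\sum_{a,a'\in\mathcal{X}_n}\Bigl(\sum_{b\in\mathcal{X}_m}M(a,b)M(a',b)\Bigr)^2.
\]
For fixed $(a,a')$ the inner sum counts $b\in\mathcal{X}_m$ with $\det(a|b)=\det(a'|b)=\la$; a case analysis on $r:=\dim\Span(a_1,\dots,a_n,a'_1,\dots,a'_n)$ shows that in the generic case $r=\min(2n,d)$ the count is $q^{dm-2}(1+O(q^{-1}))$, while the lower-rank degenerations contribute a smaller total, yielding $\sigma\ll q^{2d^2-4}$. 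Since for $q$ prime and $d\ge 2$ every non-trivial irreducible representation of $\SL_d(\Z_q)$ has dimension $\gg q^{d-1}$ by the Frobenius estimate (for $d=2$) and Landazuri--Seitz \cite{landazuri1974minimal} (for $d\ge 3$), and $MM^*$ commutes with the diagonal $\SL_d(\Z_q)$-action, every eigenspace of $MM^*$ outside the trivial component has dimension $\gg q^{d-1}$. Consequently $\mu_2^4\cdot q^{d-1}\ll\sigma\ll q^{2d^2-4}$, giving $\mu_2\ll q^{d^2/2-d/4-3/4}$ and the claimed bound. The main obstacle is the case analysis for $\sigma$: one must show that the degenerate configurations in which the combined rank of $(a,a')$ (or dually $(b,b')$) drops do not dominate the generic $q^{2d^2-4}$ contribution, and one must also verify that $\mu_1$ is attained only on the trivial representation so that the passage from $\sigma$ to a bound on $\mu_2$ is legitimate.
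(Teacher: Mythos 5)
Your proposal follows essentially the same route as the paper's proof: the singular value decomposition of the incidence matrix restricted to linearly independent tuples, identification of the top singular pair $(u_1,v_1)$ via transitivity of $\SL_d(\Z_q)$ together with the count $\la_1=\sqrt{\mathcal{M}\mathcal{N}}/(q-1)$, the fourth-moment (rectangular norm) bound $\sigma\ll q^{2d^2-4}$, and the multiplicity lower bound $\gg q^{d-1}$ coming from the minimal dimension of a non-trivial irreducible representation of $\SL_d(\Z_q)$, with the $|\mathcal{A}||\mathcal{B}|/q^2$ term arising exactly as you say from replacing $1/(q-1)$ by $1/q$. The only difference is that the paper disposes of the rectangular-norm case analysis you flag as the main obstacle more crudely, by fixing $b_1,\dots,b_{m-1}$ and reducing to the two-equation linear system \eqref{f:eq_two} already treated in Theorem \ref{t:inc_Z_q}, bounding the inner count by $q^{md-2}$ uniformly.
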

\begin{proof} 
    The case $n=m=1$ was considered in Theorem \ref{t:inc_Z_q}, so we assume that $\max \{ n,m\} \ge 2$. 
    Let 
    $a=(a_1,\dots,a_n)$, $b=(b_1,\dots,b_m)$ and let $M(a,b) = 1$ iff the pair $(a,b)$ satisfies our equation \eqref{def:D(A,B)}.  
    Considering the singular decomposition of the  matrix $M(a,b)$, we obtain 
\[
    M(a,b) = \sum_{j=1}^{q^d} \la_j u_j (a) \ov{v}_j (b) \,,
\]
    where $\la_1 \ge \la_2 \ge \dots \ge 0$ are the singularvalues and $u_j$, $v_j$ are correspondent orthonormal singularfunctions.  
    Let 
    $$\mathcal{N}=(q^d-q^m) (q^d- q^{m+1}) \dots (q^d - q^{d-1}) 
    = q^{dn} \prod_{j=1}^{n} (1-q^{-j})
    \quad 
    \mbox{and}
    \quad 
    \mathcal{M} =q^{dm} \prod_{j=1}^m (1-q^{-j}) \,.
    $$
    It is easy to calculate $\la_1$ and to show  that $u_1 (a) = \mathcal{N}^{-1/2} (1,\dots,1)\in \mathbb{R}^{\mathcal{N}}$, as well as $v_1 (b) = \mathcal{M}^{-1/2} (1,\dots,1)\in \mathbb{R}^{\mathcal{M}}$.
    Indeed, for any fixed $a$ or $b$ we need to solve the equation  
    $\det(a|b) = \la$ in $b$ or $a$, correspondingly.
    It is easy to see that the equation $\det (a|b) = \la$, $a$ is fixed,  has $q^{dm-1} \prod_{j=2}^{m} \left( 1-q^{-j} \right) = \frac{\mathcal{M}}{q-1}$ solutions due to the number of independent vectors over $\Z_q$. 
    Similarly, the second equation  has 
    $q^{dn-1} \prod_{j=2}^{n} \left( 1-q^{-j} \right) = \frac{\mathcal{N}}{q-1}$ solutions in $a$. 
    Thus, these numbers do not depend on $a$ and $b$ and hence, indeed we have $u_1 (a) = \mathcal{N}^{-1/2} (1,\dots,1)\in \mathbb{R}^{\mathcal{N}}$, $v_1 (b) = \mathcal{M}^{-1/2} (1,\dots,1)\in \mathbb{R}^{\mathcal{M}}$ and 
    $$
        \la_1 = \langle M u_1, v_1 \rangle = \frac{\mathcal{M}}{q-1} \cdot \mathcal{N} \cdot (\mathcal{M}\mathcal{N})^{-1/2}
        =   \frac{\sqrt{\mathcal{M}\mathcal{N}}}{q-1} \,.
    $$
    Thus we get
\begin{equation}\label{f:D_error}
    \left| \mathcal{D}_\la (\mathcal{A}, \mathcal{B}) - \frac{|\mathcal{A}| |\mathcal{B}|}{q-1}\right| 
    = 
    \left| \sum_{j=2}^{q^d} \la_j \langle \mathcal{A}, u_j \rangle \ov{\langle \mathcal{B}, v_j \rangle} \right| \le \la_2 \sqrt{|\mathcal{A}| |\mathcal{B}|} \,.
\end{equation}
    As above we need to estimate the rectangular norm of the matrix $M$ that is 
\[
    \sum_{j=1}^{q^d} \la_j^4 = \sum_{a,a'} \left| \sum_{b} M(a,b) M(a',b) \right|^2 \,,
\]
    and thus 
    we arrive to the system of equations 
    $\det(a'|b) = \det (a|b) = \la$ with fixed $a$ and $a'$. 
    Fixing vectors $b_1,\dots, b_{m-1}$ we 
    have exactly 
    equation \eqref{f:eq_two} which has at most $q^{md-2}$ solutions. 
    Thus 
\begin{equation}\label{tmp:23.02_1}
    \sum_{j=1}^{q^d} \la_j^4 \le q^{2nd} q^{2md-4} = q^{2d^2-4} \,.
\end{equation} 
    Now it is easy to see that 
\[
    M(ga, gb) = M(ga_1, \dots, ga_n, g b_1,\dots, g b_m) = M(a,b)
\]
    for an arbitrary $g\in \SL_d (\Z_q)$ and thus any $\la_j$, $j>1$ has multiplicity equals the minimal  dimension
of any non--trivial irreducible representation of $\SL_d(\Z_q)$.
Thus the multiplicity of $\la_2$ is at most $2^{-2} q^{d-1}$ and hence 
$$
    \la_2 \le 2 q^{d^2/2 -1} q^{-(d-1)/4} = 
    2 q^{d^2/2 - d/4 - 3/4} \,.
$$
Using the last estimate, 
and returning to formula \eqref{f:D_error}, we obtain 
\[
    2^{-3} \left| \mathcal{D}_\la (\mathcal{A}, \mathcal{B}) - \frac{|\mathcal{A}| |\mathcal{B}|}{q} \right|
    \le q^{d^2/2 - d/4 - 3/4} \sqrt{|\mathcal{A}| |\mathcal{B}|} 
    + 
    \frac{|\mathcal{A}| |\mathcal{B}|}{q^{2}}
\]
    as required. 
$\hfill\Box$
\end{proof}

\bp 

Finally, we consider an example with the cross--ratio $[a,b,c,d]:= \frac{(a-c)(b-d)}{(a-d)(b-c)}$. 
As one can see, representation theory almost immediately gives asymptotic formula \eqref{f:cross-ratio_t} with an acceptable error term.  
Let $q$ be a prime number, $\la \in \Z_q$ and $\mathcal{A} \subseteq \Z_q \times \Z_q$,  $\mathcal{B} \subseteq \Z_q \times \Z_q$ be sets.
Define 
\begin{equation}\label{f:cross-ratio}
     \mathcal{C}_\la (\mathcal{A}, \mathcal{B}) := |\{ (a_1,a_2) \in \mathcal{A},\, (b_1,b_2) \in \mathcal{B}~:~ [a_1,a_2,b_1,b_2] \equiv \la \pmod q  \}| \,.
\end{equation}

\begin{theorem}
    Let $q$ be a prime number, $\la \in \Z_q$, $\la \neq 0,1$ and $\mathcal{A} \subseteq \Z_q \times \Z_q$,  $\mathcal{B} \subseteq \Z_q \times \Z_q$ be sets. Then  
\begin{equation}\label{f:cross-ratio_t}
 \left|\mathcal{C}_\la (\mathcal{A}, \mathcal{B}) - \frac{|\mathcal{A}| |\mathcal{B}|}{q} \right| \le 4 q^{3/4} \sqrt{|\mathcal{A}| |\mathcal{B}|} \,.
\end{equation} 
\label{t:cross-ratio}
\end{theorem}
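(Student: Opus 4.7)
The plan is to follow the singular-value-decomposition strategy of Theorems \ref{t:inc_Z_q} and \ref{t:det_inc}. Define the incidence matrix $M:\Z_q^2 \times \Z_q^2 \to \{0,1\}$ by $M((a_1,a_2),(b_1,b_2)) = 1$ iff $[a_1,a_2,b_1,b_2] \equiv \la \pmod q$, and consider its singular value decomposition $M = \sum_j \la_j u_j \ov{v}_j$ with $\la_1 \ge \la_2 \ge \cdots \ge 0$. Then
\[
    \mathcal{C}_\la(\mathcal{A},\mathcal{B}) = \sum_j \la_j \langle \mathcal{A}, u_j\rangle \ov{\langle \mathcal{B}, v_j\rangle} \,,
\]
and $|\mathcal{C}_\la(\mathcal{A},\mathcal{B}) - \la_1 \langle \mathcal{A}, u_1\rangle \ov{\langle \mathcal{B}, v_1\rangle}| \le \la_2 \sqrt{|\mathcal{A}||\mathcal{B}|}$ by the standard Cauchy--Schwarz estimate, so it remains to identify $\la_1, u_1, v_1$ (which should produce the main term $|\mathcal{A}||\mathcal{B}|/q$) and to bound $\la_2$.

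For the main term, rewrite the equation as $(a_1 - b_1)(a_2 - b_2) \equiv \la (a_1 - b_2)(a_2 - b_1) \pmod q$. For fixed $a = (a_1, a_2)$ with $a_1 \neq a_2$ and $\la \neq 0, 1$, the coefficient of $b_2$ is a nonzero linear form in $b_1$, so $b_2 = \phi_a(b_1)$ is a M\"obius function of $b_1$ and there are $q + O(1)$ solutions $b \in \Z_q^2$, uniform in $a$. Hence the constants $u_1, v_1 \propto (1,\dots,1) \in \mathbb{R}^{q^2}$ are approximate top singular vectors with $\la_1 = q + O(1)$, yielding the main term $|\mathcal{A}||\mathcal{B}|/q$ up to an $O(|\mathcal{A}||\mathcal{B}|/q^2)$ correction that is trivially absorbed in the target error. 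One can make this completely rigorous by extending $M$ to the projective model $\mathbb{P}^1(\Z_q) \times \mathbb{P}^1(\Z_q)$, where the transitive action of $\mathrm{PGL}_2(\Z_q)$ on pairs of distinct points makes the count of $b$'s exactly constant in $a$.

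For $\la_2$, the key observation is that the cross-ratio is invariant under the diagonal M\"obius action of $\mathrm{PSL}_2(\Z_q)$, so on the projective model $M$ commutes with this action and every non-trivial singular subspace decomposes as a direct sum of non-trivial $\mathrm{PSL}_2(\Z_q)$-representations; by Frobenius, each such piece has dimension at least $(q-1)/2$. To bound the rectangular norm $\sigma := \sum_j \la_j^4 = \sum_{a,a'}\big(\sum_b M(a,b) M(a',b)\big)^2$, note that for generic $(a,a')$ the inner sum counts $b$ with $\phi_a(b_1) = b_2 = \phi_{a'}(b_1)$, and equating these two M\"obius maps gives a quadratic in $b_1$, hence at most $O(1)$ solutions; the diagonal stratum $a = a'$ contributes $\sum_a (q+O(1))^2 \ll q^4$, and the remaining low-dimensional coincidence loci contribute similarly. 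Thus $\sigma \ll q^4$, giving $\la_2^4 \cdot (q-1)/2 \ll q^4$ and $\la_2 \ll q^{3/4}$, which combined with the displayed inequality above yields \eqref{f:cross-ratio_t}. The main obstacle I foresee is accounting cleanly for the several degenerate loci (the diagonals $a_1 = a_2$ and $b_1 = b_2$, coincidences among $\{a_i, a'_i\}$, the pole of $\phi_a$, and points at infinity in the projective extension); each is of codimension at least one and is safely absorbed by the factor of $4$ in the stated error term.
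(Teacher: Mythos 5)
Your proposal is correct and follows essentially the same route as the paper: spectral decomposition of the incidence matrix, identification of the constant top eigenvector via the $\SL_2(\Z_q)$-invariance of the cross-ratio, a fourth-moment (rectangular norm) bound of order $q^4$ obtained by analysing the system $[x,y,c,d]=[x,y,c',d']=\la$, and the Frobenius $(q-1)/2$ lower bound on non-trivial representations to convert this into $\la_2\ll q^{3/4}$. The only differences are cosmetic: the paper uses the eigendecomposition of the (symmetric) matrix rather than an SVD, and it carries out the degenerate-loci count explicitly (at most $4$ solutions generically, $2q$ in the two exceptional cases, giving $\sigma\le 24q^4$) where you argue by genericity.
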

\begin{proof} 
    As usual let $a=(a_1,a_2)$, $b=(b_1,b_2)$ and let $M(a,b)=1$
    iff the pair $(a,b)$ satisfies our equation \eqref{f:cross-ratio}. 
    It is well--known that  $\SL_2 (\Z_q)$ preserves the cross--ratio in the sense 
\begin{equation}\label{f:M_cross-ratio}
    M(ga, gb) = M(ga_1, ga_2, g b_1, g b_2) = M(a,b) \,.
\end{equation}    
    Considering the unitary decomposition of the hermitian matrix $M(a,b)$ as in \eqref{f:unitary_decomposition} we see that the property $u_1 (a) =  q^{-1} (1,\dots,1) \in \mathbb{R}^{q^{2}}$ 
    automatically follows from \eqref{f:M_cross-ratio} and 2--transitivity of $\SL_2 (\Z_q)$ on the projective line. 
    It remains to calculate the rectangular norm of the matrix $M$, that is to solve the system $[x,y,c,d]=[x,y,c',d']=\la$. 
    It follows that 
\begin{equation}\label{tmp:13.06_1}
    xy(1-\la) + (\la c-d)x + (\la d-c) y + dc(1-\la) = 0\,, 
\end{equation}
    and 
\begin{equation}\label{tmp:13.06_2}
    xy(1-\la) + (\la c'-d')x + (\la d'-c') y + d'c'(1-\la) = 0 \,.
\end{equation}
    Subtracting \eqref{tmp:13.06_2} from \eqref{tmp:13.06_1} we arrive to the equation 
\begin{equation}\label{tmp:13.06_3} 
    x (\la (c-c') + d'-d) + y(\la (d-d') + c'-c) + (1-\la) (dc-d'c') = 0
\end{equation}
    and this is a non--trivial equation excluding two cases: $c=c'$, $d=d'$ and $\la = -1$, $c=d'$, $d=c'$. 
    If equation \eqref{tmp:13.06_3} is non--trivial, then we substitute, say, $x$ into \eqref{tmp:13.06_1} and obtain at most $4$ solutions in $x,y$ (one can check that we obtain a non--trivial equation thanks to our condition $\la \neq 0,1$). 
    In the exceptional cases we have just one equation, say, \eqref{tmp:13.06_1}, and it is easy to see that our equation has at most $2q$ solutions.
    Thus 
\[
    \sum_{j=1}^{q^2} \mu_j^4 = \sum_{a,a'} \left| \sum_{b} M(a,b) M(a',b) \right|^2 
    \le 16 q^4 + 2 q^2 (2q)^2 = 24 q^4 
    \,.
\]
It remains to use  the Frobenius Theorem \cite{frobenius1896gruppencharaktere} about minimal representations of $\SL_2 (\Z_q)$.
This result gives us the bound 
$\mu_2 \le 4 q^{3/4}$ and we can apply the arguments as in the proofs of Theorems \ref{t:inc_Z_q}, \ref{t:det_inc}.    
    This completes the proof. 
$\hfill\Box$
\end{proof}

\section{On sums with multiplicative characters over some manifolds and other applications}
\label{sec:applications}

In this section we want to extend representation theory methods to some sums with multiplicative characters.  
Below $p$ is a prime number and $\F$ is a finite field of characteristic $p$.  
Let us consider 
a basic 
example. 
We know that $\SL_2 (\F)$ acts on  the projective line and it gives us an irreducible representation of this group but from \cite{kirillov2012elements}, say, it is well--known that there are other irreducible  representations of  $\SL_2 (\F)$ and a half of them are connected with ``projective lines'' equipped with multiplicative characters $\chi$. 
More precisely, it means that we consider the family  of functions $f : \F \times \F \to \C$ such that 
\begin{equation}\label{f:f_chi}
    f(\la x, \la y) = \chi (\la) f(x,y)\,, \quad \quad \forall \la \in \F^* 
    \quad \mbox{ and } \quad  \forall (x,y)\in (\F \times \F) \setminus \{0\} \,,
\end{equation} 
and now  $\SL_2 (\F)$ acts on this  family, as well as on $\F \times \F$ in a natural way. 
In our results below we do not need to use the knowledge of concrete irreducible representations of $\SL_2 (\F)$ (and other groups) but we will use only definition \eqref{f:f_chi} somehow.

Let us start with 
the following 
auxiliary proposition 
concerning summation over a hyperbolic surface (twisted by a multiplicative character) in the spirit of paper 
\cite{skorobogatov1992exponential}, say.

\begin{proposition}
    Let $A,B\subseteq \F_p$ and $G\subseteq \GL_2 (\F_p)$ be sets
    and $\chi$ be a non--trivial multiplicative character.
    Also, let 
    $c_A : A \to \mathcal{D}$, $c_B : B \to \mathcal{D}$  
    be some weights. 
    Then for any integer $k\ge 2$ the following holds 
\[
    2^{-2} \left| \sum_{a,b} c_A (a) c_B (b) \sum_{g\in G ~:~ ga =b} \chi(\gamma a+\d) \right| 
\]
\begin{equation}\label{f:chi_inc}
    \le 
    \sqrt{|A||B||G|} \cdot \T^{1/8k}_{2k} (f_G) 
    + \sqrt{|A||B|} |G| \cdot (\max \{|A|, |B|\})^{-1/2k} \,.
\end{equation}
\label{p:chi_inc}
\end{proposition}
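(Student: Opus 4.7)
Denote the quantity inside the absolute value by $S$, and write it as $S = \sum_{g \in G} F(g)$, where
\[
F(g) := \sum_{a \in A} A(a) B(ga) c_A(a) c_B(ga) \chi(\gamma_g a + \delta_g),
\]
isolating the outer $G$-summation. I would first replace $G$ with the balanced function $f_G = G - |G|/|\GL_2(\F_p)|$: for the averaged piece, for each fixed $(a,b) \in A \times B$ the fiber $\{g \in \GL_2(\F_p) : ga = b\}$ is a coset of the stabilizer of $a$ on which the cocycle $\gamma_g a + \delta_g$ sweeps $\F_p^*$ up to a scalar. Invoking the transformation law \eqref{f:f_chi} and the orthogonality $\sum_{\lambda \in \F_p^*} \chi(\lambda) = 0$ (since $\chi$ is non-principal), the averaged contribution vanishes; so only $\sum_g f_G(g) F(g)$ survives and must be controlled.

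Next I would apply H\"older's inequality with exponent $2k$ in the variable $a \in A$. Expanding $|\cdot|^{2k}$ produces a sum over $(g_1,\dots,g_{2k}) \in \GL_2(\F_p)^{2k}$ of $\prod_i f_G(g_i)^{\pm}$ (alternating conjugation), weighted by the product $\prod_i \chi^{\pm 1}(\gamma_{g_i} a + \delta_{g_i})$. The automorphy identity
\[
\chi(\gamma_{g_1} a + \delta_{g_1}) \cdot \overline{\chi(\gamma_{g_2} a + \delta_{g_2})} = \chi\bigl(\gamma_{g_1 g_2^{-1}} (g_2 a) + \delta_{g_1 g_2^{-1}}\bigr),
\]
combined with \eqref{f:f_chi}, lets me pair adjacent $(g_{2i-1}, g_{2i})$ and collapse the character product into a single character evaluated at the cocycle of the alternating word $w = g_1 g_2^{-1} \cdots g_{2k-1} g_{2k}^{-1}$. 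After an obvious change of variable, the remaining $a$-sum becomes a character sum: bounded trivially by $\max\{|A|,|B|\}$ when $w = \mathrm{id}$, and exhibiting substantial cancellation (by orthogonality of $\chi$ on $\F_p^*$) otherwise.

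Assembling both parts: the tuples with $w = \mathrm{id}$ are parametrized (up to the weight pattern) exactly by the combinatorial condition defining $\T_{2k}(f_G)$, and their aggregated contribution, after taking the appropriate $2k$-th and square roots, produces the first term $\sqrt{|A||B||G|} \cdot \T_{2k}^{1/(8k)}(f_G)$ in \eqref{f:chi_inc}; the tuples with $w \ne \mathrm{id}$ are bounded by trivial counting $|G|^{2k}$ together with the character-sum savings $\max\{|A|,|B|\}^{-1/(2k)}$ in $a$, and assemble into the error $\sqrt{|A||B|}\, |G|\, (\max\{|A|,|B|\})^{-1/(2k)}$. The main obstacle I anticipate lies in the bookkeeping of this H\"older expansion: tracking conjugation patterns across $2k$ factors, using \eqref{f:f_chi} at precisely the right moment so that the $2k$ scattered characters reassemble into a single cocycle character of one word in the $g_i$'s, and verifying that the final exponents emerge as exactly $1/(8k)$ and $1/(2k)$ after all roots are taken. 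This is where the representation-theoretic structure of $\chi$-twisted functions really earns its keep, since without property \eqref{f:f_chi} the character products would not simplify.
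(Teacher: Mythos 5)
Your setup is sound: the cocycle identity $j(g_1g_2,x)=j(g_1,g_2x)j(g_2,x)$ for $j(g,x)=\gamma_g x+\delta_g$ is correct, and so is the observation that non-triviality of $\chi$ kills the averaged ("main") term. This is the projective-line counterpart of what the paper actually does, namely lifting to $\F_p\times\F_p$ via $\mathcal{A}(\la a,\la)=c_A(a)\overline{\chi(\la)}$, $\mathcal{B}(\mu b,\mu)=c_B(b)\chi(\mu)$, after which the character is absorbed into the weights and the sum becomes $(p-1)^{-1}$ times a plain weighted incidence count (identity \eqref{tmp:AB}); from that point on the character plays no role except through $\sum\mathcal{B}=0$. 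So far the two routes are essentially equivalent.

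However, the core of your moment estimate has two genuine gaps. First, your treatment of the non-identity words is wrong in mechanism: you claim the $a$-sum "exhibits substantial cancellation by orthogonality of $\chi$ on $\F_p^*$", but that sum runs over the \emph{arbitrary} set $A$, further restricted by the conditions $g_ia\in B$, and no character cancellation is available over arbitrary subsets; moreover the saving $(\max\{|A|,|B|\})^{-1/2k}$ has the wrong shape to come from a character sum. In the paper the second term of \eqref{f:chi_inc} is a pure counting term: after the H\"older step
\[
\sigma^{2k}\le\|\mathcal{A}\|_2^{2k}\|\mathcal{B}\|_2^{2k-2}\sum_h f^{(k)}_{GG^{-1}}(h)\sum_x\mathcal{B}(x)\mathcal{B}(hx)\,,
\]
one splits according to whether the correlation $\sum_x\mathcal{B}(x)\mathcal{B}(hx)$ is $O(p)$ or not — not according to whether the word is the identity — and the $O(p)$ range gives the second term trivially. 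Second, your bookkeeping for the identity words cannot produce the stated first term: collapsing the $2k$ characters and bounding the $a$-sum by $\max\{|A|,|B|\}$ yields a contribution of order $|A|\cdot\T_{2k}^{1/2k}(f_G)$, i.e.\ exponent $1/2k$ with prefactor $|A|$, not $\sqrt{|A||B||G|}\cdot\T_{2k}^{1/8k}(f_G)$. The exponent $1/8k$ comes from a further Cauchy--Schwarz splitting $\sum_h f^{(k)}_{GG^{-1}}(h)c(h)\le(\sum_h(f^{(k)}_{GG^{-1}})^2)^{1/4}(\sum_h|f^{(k)}_{GG^{-1}}|)^{1/2}(\sum_h c(h)^4)^{1/4}$ combined with the fact that four points determine an element of $\GL_2(\F_p)$ (to control the fourth moment of the correlations $c(h)=\sum_x\mathcal{B}(x)\mathcal{B}(hx)$ on the large range). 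This entire second stage is absent from your sketch, and without it the claimed inequality does not follow.
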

\begin{proof} 
    Consider the functions $\mathcal{A} (\la a,\la) = c_A (a) \overline{\chi (\la)} = \mathcal{A} (\ov{x})$, $\mathcal{B} (\mu b,\mu) = c_B (b) \chi(\mu) = \mathcal{B} (\ov{y})$, where $a\in A$, $b\in B$, $\ov{x} = (x_1,x_2)$, $\ov{y} = (y_1,y_2)$ and $\mu,\la$ run over $\F^*_p$. 
    It is easy to see that we always have $\sum_{a,\la} \mathcal{A} (\la a,\la) =0$, as well as   $\sum_{b,\mu} \mathcal{B} (\mu b,\mu) = 0$ 
    since 
    $\chi$ is a non--trivial character. 
    Notice that 
\begin{equation}\label{tmp:AB}
    \sigma:= \sum_{\ov{x},\ov{y}} \mathcal{A} (\ov{x}) \mathcal{B} (\ov{y}) \sum_{g\in G ~:~ g \ov{x} = \ov{y}} 1 =
    (p-1)\sum_{a,b} c_A (a) c_B (b) \sum_{g\in G ~:~ ga =b} \chi(\gamma a+\d) 
\end{equation} 
    for any trivial/non--trivial multiplicative character $\chi$. 
    We can interpret  the left--hand side of \eqref{tmp:AB} as 
    the number of some 
    points 
    on a hyperbolic surface  counting  with weights $\mathcal{A} (\ov{x})$, $\mathcal{B} (\ov{y})$. 
    The H\"older inequality (see \cite[Lemma 13]{sh_Kloosterman})  gives us 
\[
    \sigma^{2k} \le \|\mathcal{A}\|^{2k}_2
     \|\mathcal{B}\|^{2k-2}_2 
     \sum_{h} f^{(k)}_{G G^{-1}} (h) \sum_{x} \mathcal{B}(x) \mathcal{B}(h x) \,.
\]
    Applying identity \eqref{tmp:AB}, it is easy to see that the contribution of the terms with $\sum_{x} \mathcal{B}(x) \mathcal{B}(h x) \le 32 p$, say,  corresponds to the second term from \eqref{f:chi_inc}. 
    Now using 
    \cite[Lemma 12]{sh_Kloosterman} 
    (we notice that, say, $4$ different points uniquely determine the transformation from $\GL_2 (\F_p)$), 
    combining with 
    the H\"older inequality again, we derive 
\[
    \sigma^{2k} \le  
    (|A| (p-1))^k  (|B| (p-1))^{k-1}  
\]
\[
\times 
    \left( \sum_{h} (f^{(k)}_{G G^{-1}} (h))^2 \right)^{1/4}
    \left( \sum_{h} |f^{(k)}_{G G^{-1}} (h)| \right)^{1/2}
    \left( \sum_{h} \left( \sum_{x} \mathcal{B}(x) \mathcal{B}(h x) \right)^4 \right)^{1/4}
\]
\begin{equation}\label{tmp:29.04_1}
    \le 4^k (|A| (p-1))^k  (|B| (p-1))^{k-1} \T^{1/4}_{2k} (f_G) |G|^k \cdot |B| (p-1) \,.
\end{equation} 
    Recalling \eqref{tmp:AB}, we see that estimate  \eqref{tmp:29.04_1} is equivalent to the required bound  \eqref{f:chi_inc}.  
    This completes the proof. 
$\hfill\Box$
\end{proof}

\bigskip

Now we obtain some concrete applications of Proposition \ref{p:chi_inc}, which correspond to Theorems \ref{t:chi_hyp+}, \ref{t:chi_Kloosterman_intr} of the introduction. 
Let 
$A,B, X,Y\subseteq \F_p$ be sets.
Consider the equation 
\begin{equation}\label{eq:hyp}
    (a+x)(b+y) \equiv 1 \pmod p 
\end{equation}
or, in other words, 
$y=-b+ 1/(a+x) = g_{a,b} x$, where $\det (g_{a,b}) = -1$. 
The energy $\T_{2k} (f_G)$ of the correspondent family of transformations $G = \{g_{a,b}\}_{a\in A,b\in B}$ was estimated many times see, e.g., paper 
\cite{sh_Kloosterman}. 
Applying Proposition \ref{p:chi_inc} to this particular case of equation \eqref{eq:hyp}, we obtain

\begin{corollary}
    Let $\d>0$ be a real number, $A,B, X,Y\subseteq \F_p$ be sets, let $\chi$ be a non--principal multiplicative character and $|X||Y| \ge p^{\d}$. 
    Also, let 
    $c_A : A \to \mathcal{D}$, $c_B : B \to \mathcal{D}$  
    be some weights. 
    Then there is $\eps(\d)>0$ such that 
\[
    \sum_{a,b,x,y ~:~ (a+x)(b+y)=1} c_A (a) c_B (b)  X(x) Y(y) \chi(a+x) 
    \le \sqrt{|A||B|} (|X||Y|)^{1-\eps(\d)}
     \,.
\]
\label{c:chi_hyp}
\end{corollary}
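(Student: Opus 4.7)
The plan is to apply Proposition~\ref{p:chi_inc} to the Möbius family associated with the equation $(a+x)(b+y)=1$, and then to invoke the energy bound for that family coming from \cite{sh_Kloosterman}.

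As noted just above the statement of the corollary, the defining equation is equivalent to $y=g_{a,b}x$, where $g_{a,b}\in\GL_2(\F_p)$ has determinant $-1$ and bottom row $(1,a)$. Putting $G=\{g_{a,b}\}_{a\in A,\,b\in B}$, the map $(a,b)\mapsto g_{a,b}$ is injective, so $|G|=|A||B|$, and $\chi(a+x)=\chi(\gamma_g x+\delta_g)$ in the notation of Proposition~\ref{p:chi_inc}. Hence the sum in the corollary equals
$$
S=\sum_{x\in X,\,y\in Y}\ \sum_{g\in G\,:\,gx=y}\ w(g)\,\chi(\gamma_g x+\delta_g),\qquad w(g_{a,b}):=c_A(a)c_B(b)\in\mathcal{D},
$$
i.e.\ the incidence sum of Proposition~\ref{p:chi_inc} with $X$, $Y$ in the roles of $A$, $B$ and with a bounded weight attached to $G$ rather than to the endpoints. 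Since the proof of Proposition~\ref{p:chi_inc} uses only Hölder-type inequalities and $L^2$-bounds, it is insensitive to moving a bounded weight from the endpoints to the group elements; applied in this setting it yields, for every integer $k\ge 2$,
$$
|S|\ll \sqrt{|X||Y|\cdot|A||B|}\cdot\T_{2k}^{1/8k}(f_G)+\sqrt{|X||Y|}\cdot|A||B|\cdot\max\{|X|,|Y|\}^{-1/2k}.
$$

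The next step is the energy input. The quantity $\T_{2k}(f_G)$ for this explicit two-parameter Möbius family is bounded in \cite{sh_Kloosterman} by $\T_{2k}(f_G)\ll |G|^{4k-\eta_k}$ with an explicit $\eta_k>0$, the bound being nontrivial whenever the underlying sets are not too small relative to $p$. Crucially, the defining system of $2k$ equations can be reinterpreted in the $(x,y)$-variables, and it is this dual interpretation that allows the assumption $|X||Y|\ge p^\delta$ to force the saving to be measured in $|X||Y|$ rather than in $|G|=|A||B|$. Substituting this into the first term of the display above produces a contribution $\ll\sqrt{|A||B|}\,(|X||Y|)^{1-\varepsilon_1(\delta,k)}$ for some $\varepsilon_1>0$.

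The second term is handled by a short case analysis on the size of $|A||B|$ compared with powers of $|X||Y|$. When $|A||B|\le (|X||Y|)^{1-2\varepsilon}$ the trivial bound $|S|\le|A||B|\sqrt{|X||Y|}$ already gives the desired estimate; otherwise $|G|$ is large, the energy bound applies, and the factor $\max\{|X|,|Y|\}^{-1/2k}\le p^{-\delta/(4k)}$ (arising from $|X||Y|\ge p^\delta$) together with the energy estimate absorbs the remaining $|A||B|$-dependence. Optimising $k=k(\delta)$ then produces the advertised $\varepsilon(\delta)>0$.

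The main obstacle is exactly this energy input: Proposition~\ref{p:chi_inc} delivers a saving governed by $\T_{2k}(f_G)$, whose natural size parameter is $|G|=|A||B|$, whereas the conclusion asks for a saving measured by $|X||Y|$. Bridging the two goes through the dual interpretation of the system of Möbius equations in the $(x,y)$-variables supplied by \cite{sh_Kloosterman}, and it is precisely at that step that the hypothesis $|X||Y|\ge p^\delta$ is actually used.
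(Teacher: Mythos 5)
Your application of Proposition \ref{p:chi_inc} has the two roles reversed, and this is precisely what creates the ``main obstacle'' you then try to argue around. You take the points to be $X,Y$ and the transformations to be $G=\{g_{a,b}\}_{a\in A,\,b\in B}$, so the saving delivered by the proposition is governed by $\T_{2k}(f_G)$ with $|G|=|A||B|$ --- a quantity intrinsic to the subset $G\subseteq \GL_2(\F_p)$, on which the hypothesis $|X||Y|\ge p^{\d}$ has no bearing. The ``dual interpretation of the system of M\"obius equations in the $(x,y)$-variables'' that you invoke to convert a saving measured in $|A||B|$ into one measured in $|X||Y|$ is not an argument: no such reinterpretation of $\T_{2k}(f_G)$ exists, and even granting the best conceivable bound $\T_{2k}(f_G)\ll |G|^{4k}p^{-3}$ your first term is $\sqrt{|X||Y|}\,|A||B|\,p^{-3/(8k)}$, which exceeds the target whenever $|A||B|$ is substantially larger than $|X||Y|$ --- a regime the corollary permits. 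The intended application is the opposite one: write $(a+x)(b+y)=1$ as $b=h_{x,y}a$ with $h_{x,y}=(-y, 1-xy\,|\,1,x)$, $\det h_{x,y}=-1$, so that the transformation family is $G=\{h_{x,y}\}_{(x,y)\in X\times Y}$ with $|G|=|X||Y|$, the bottom row gives $\chi(\gamma a+\d)=\chi(a+x)$, and the weighted point sets are $A$ and $B$. Then Proposition \ref{p:chi_inc} applies verbatim (the weights $c_A,c_B$ sit on the points exactly as stated, so no transfer of weights onto group elements is needed), and the hypothesis $|X||Y|\ge p^{\d}$ is exactly what the energy estimates of \cite{sh_Kloosterman} for this two-parameter M\"obius family require: for $k=k(\d)$ one gets $\T_{2k}(f_G)\ll |G|^{4k}p^{-3}$, whence the first term of \eqref{f:chi_inc} is at most $\sqrt{|A||B|}\,|X||Y|\,p^{-3/(8k)}\le \sqrt{|A||B|}\,(|X||Y|)^{1-3/(16k)}$, using $p\ge (|X||Y|)^{1/2}$. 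That this is the intended reading is confirmed by the use of the corollary in the proof of Theorem \ref{t:chi_interval}, where the saving $h^{-c}$ is measured in the short intervals $H$ (the $X,Y$ role) while $A$ is the large weighted set.

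Two further remarks. The paper's own sentence defining $G=\{g_{a,b}\}_{a\in A,\,b\in B}$ is misleading and may be what led you astray, but the quantitative shape of the conclusion (square root on $|A||B|$, power saving and size hypothesis on $|X||Y|$) only matches Proposition \ref{p:chi_inc} under the assignment above. Also, your instinct that the second term of \eqref{f:chi_inc} needs a separate discussion is right, but after the roles are corrected that term reads $\sqrt{|A||B|}\,|X||Y|(\max\{|A|,|B|\})^{-1/2k}$, so the delicate regime is the one where $A$ and $B$ are small relative to $|X||Y|$, not the one your case analysis addresses; the paper is silent on this point.
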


The above corollary immediately implies Theorem \ref{t:chi_Kloosterman_intr} from the introduction (compare with \cite[Theorems 4, 33, 34]{sh_Kloosterman}) which we recall here for the 
reader's convenience. 
The proof repeats the argument of \cite[Theorems 33, 34]{sh_Kloosterman}, the only small difference is the absence of the third term in formula (72) of \cite[Theorems 34]{sh_Kloosterman} due to the fact that our $\chi$ is non--principal. 
Below the sign $\lesssim$  means a multiple of the form $\log^{O(1)} (MN \| \FF{\a} \|_\infty \| \FF{\beta} \|_\infty)$.

\begin{corollary}
    Let $c>0$,  $\chi$ be a non--principal multiplicative character and $p$ be a prime number. 
	Let $t_1, t_2 \in \F_p$, $N,M$ be integers, $N,M \le p^{1-c}$ 
	and let 
	$\a,\beta  : \F_p \to \C$ be functions supported on $\{1,\dots, N\} +t_1$ and $\{1,\dots, M\} +t_2$, respectively.  
	Then there exists $\d = \delta (c) >0$ 
 such that  
	\begin{equation}\label{f:d-est_intr}
	S_\chi (\a,\beta ) \lesssim 
	\| \a \|_2 \| \beta \|_2  p^{1-\d}  \,.  
	\end{equation}
	Besides, 
	\begin{equation}\label{f:Kloosterman_NM_1}
		S(\a,\beta) \lesssim \| \beta \|_2 \left(\|\FF{\a}\|_{L^{4/3}} N^{7/48} M^{7/48} p^{23/24}   
		  +  (\|\a\|_2 \| \a\|_1)^{1/2} p^{3/4}  \right) 
		  \,,
	\end{equation}
	and if 
	$M^2 N^2 \|\FF{\a}\|^{12}_{L^{4/3}} < p \| \a\|_2^{12}$,  
	then  
\begin{equation}\label{f:Kloosterman_NM_2}
	S(\a,\beta) \lesssim \| \beta \|_2 \left(\|\FF{\a}\|^{6/7}_{L^{4/3}} \|\a \|^{1/7}_2 N^{1/7} M^{1/7} p^{13/14} 
	+  (\|\a\|_2 \| \a\|_1)^{1/2} p^{3/4} + p^{13/12} \| \FF{\a} \|_{L^{4/3}} \right) \,.
\end{equation}
\label{c:chi_Kloosterman}
\end{corollary}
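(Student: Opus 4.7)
The plan is to follow closely the argument of \cite[Theorems 33, 34]{sh_Kloosterman}, with one key substitution: replace the classical incidence count on the hyperbolic surface $(a+x)(b+y) \equiv 1 \pmod p$ by its $\chi$-twisted analog provided by Corollary \ref{c:chi_hyp}. As a first step I would expand the Kloosterman sum to obtain
\[
S_\chi(\alpha,\beta) = \sum_{x\neq 0} \chi(x)\, \widehat{\alpha}(x)\, \widehat{\beta}(x^{-1}),
\]
so that the problem reduces to bounding a weighted $\chi$-twisted sum along the multiplicative inversion $y = x^{-1}$.

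A H\"older inequality of even order $2k$ in $x$ then separates the two Fourier transforms, and Plancherel recasts the resulting expression as a $\chi$-twisted sum over quadruples $(a,b,x,y)$ with $(a+x)(b+y) \equiv 1$, with weights given by additive convolutions of $\alpha$ and $\beta$. This is precisely the shape to which Corollary \ref{c:chi_hyp} applies, provided the parameters $|X|, |Y|$ of the auxiliary sets produced by H\"older satisfy $|X||Y| \ge p^{\delta}$ for some $\delta > 0$. The hypothesis $N, M \le p^{1-c}$ guarantees this condition, so one obtains a power saving $\varepsilon(\delta) > 0$ in the hyperbolic sum, which propagates into the power saving $\delta(c)$ claimed in \eqref{f:d-est_intr}.

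For the sharper estimates \eqref{f:Kloosterman_NM_1} and \eqref{f:Kloosterman_NM_2} I would rerun the more quantitative balancing of \cite[Theorem 34]{sh_Kloosterman}, keeping the norms $\|\widehat{\alpha}\|_{L^{4/3}}$ and $\|\alpha\|_1, \|\alpha\|_2$ explicit and choosing the H\"older exponent $k$ optimally against the rectangular norm $\T_{2k}(f_G)$ of the family of hyperbolic transformations (whose explicit bounds are recalled in the discussion preceding Corollary \ref{c:chi_hyp}). The one structural simplification is that the character expansion of the indicator of the hyperbolic surface in \cite[formula (72)]{sh_Kloosterman} produces three terms, one of which corresponds to the principal multiplicative character; here, because we have already twisted by a non-principal $\chi$ from the start, the product with the principal character is still non-principal and is absorbed into Corollary \ref{c:chi_hyp}. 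Consequently the third term in (72) is simply absent, and the remaining two terms produce exactly \eqref{f:Kloosterman_NM_1} and \eqref{f:Kloosterman_NM_2}.

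I expect the main obstacle to be purely bookkeeping: matching the delicate fractional exponents appearing in the final displays with the output of H\"older's inequality of the appropriate order, and verifying that the auxiliary condition $M^2 N^2 \|\widehat{\alpha}\|^{12}_{L^{4/3}} < p \|\alpha\|_2^{12}$ for \eqref{f:Kloosterman_NM_2} is exactly what is needed to tilt the optimization in favor of the improved first term. No conceptually new ingredient beyond the twisted hyperbolic incidence bound of Corollary \ref{c:chi_hyp} is required; the non-principality of $\chi$ is what both enables the application of that corollary (whose hypothesis excludes the trivial character) and simultaneously eliminates the third error term of the original argument.
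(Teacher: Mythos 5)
Your proposal follows essentially the same route as the paper: the paper's own ``proof'' is precisely the remark that one repeats the argument of \cite[Theorems 33, 34]{sh_Kloosterman} with the untwisted hyperbolic incidence count replaced by Corollary \ref{c:chi_hyp}, the only difference being that the third term of formula (72) there disappears because $\chi$ is non--principal. Your outline (Fourier expansion of $S_\chi$ along $y=x^{-1}$, H\"older of order $2k$, application of the twisted incidence bound, and the quantitative balancing for \eqref{f:Kloosterman_NM_1}--\eqref{f:Kloosterman_NM_2}) is a correct and in fact more explicit account of that same argument.
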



Now  consider the case when our set $A$ is a collection of disjoint intervals. It is an important family of sets, including discrete fractal sets see, e.g., papers \cite{BD_AD_def}, \cite{chang2011partial}, 
\cite{MMS_popular}---\cite{ushanov2010larcher} and 
\cite{zaremba1972methode}. 

\begin{theorem}
    Let $\Lambda \subset \F_p$, $I=[N]$,  $A=I \dotplus \Lambda$, $|A|>p^{1-\epsilon}$,  and $\chi$ be a non--principal multiplicative character. Then there is an absolute constant $c_*>0$ such that 
\begin{equation}\label{f:chi_interval}
    |\sum_{x\in A\cap A^{-1}} \chi (x)|
    \le |A\cap A^{-1}| \cdot  N^{-c_*} 
    \ll \frac{|A|^2}{p} \cdot  N^{-c_*} \,,
\end{equation} 
    provided $N\ge p^{\epsilon/c_*}$.
\label{t:chi_interval}
\end{theorem}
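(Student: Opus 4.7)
The plan is to recognise the sum $\Sigma:=\sum_{x\in A\cap A^{-1}}\chi(x)$ as a character-weighted incidence on the hyperbolic surface $\{xy=1\}$ taken over two copies of $A$, and to invoke Corollary~\ref{c:chi_hyp}. Since $A=I\dotplus\Lambda$ is a direct sum, each $x\in A$ admits a unique representation $x=i+\lambda$ with $i\in I$, $\lambda\in\Lambda$; applying this to both $x$ and $x^{-1}$ when $x\in A\cap A^{-1}$ rewrites the sum as
\[
\Sigma=\sum_{\substack{i,i'\in I,\ \lambda,\lambda'\in\Lambda\\ (i+\lambda)(i'+\lambda')=1}}\chi(i+\lambda),
\]
which is exactly the shape treated in Corollary~\ref{c:chi_hyp}.

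Invoking that corollary with $A_{\mathrm{cor}}=B_{\mathrm{cor}}=\Lambda$, $X_{\mathrm{cor}}=Y_{\mathrm{cor}}=I$, trivial weights, and with $\delta$ chosen so that $N^{2}\ge p^{\delta}$ (legitimate since $N\ge p^{\epsilon/c_*}$) yields
\[
|\Sigma|\ll |\Lambda|\cdot N^{2-2\varepsilon(\delta)}=|A|\cdot N^{1-2\varepsilon(\delta)}.
\]
To normalise by $|A\cap A^{-1}|$, I would bound the latter from below using multiplicative Fourier analysis on $\F_{p}^{*}$: Parseval together with the orthogonality of characters gives
\[
|A\cap A^{-1}|=(p-1)\sum_{\psi}|\widehat A(\psi)|^{2}\ge\frac{|A|^{2}}{p-1},
\]
and hence $|A\cap A^{-1}|\gg|A|^{2}/p$. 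The companion upper bound $|A\cap A^{-1}|\ll|A|^{2}/p$, which promotes the first inequality of the theorem to the second, follows by controlling the tail $\sum_{\psi\ne\psi_{0}}|\widehat A(\psi)|^{2}$ through the same hyperbolic-incidence estimate applied to each non-principal $\psi$. Combining these gives
\[
\frac{|\Sigma|}{|A\cap A^{-1}|}\ll \frac{p\,N^{1-2\varepsilon(\delta)}}{|A|}\le p^{\epsilon}\cdot N^{1-2\varepsilon(\delta)},
\]
and the right-hand side is $\le N^{-c_*}$ once $c_*$ is chosen sufficiently small in terms of $\varepsilon(\delta)$, using $N\ge p^{\epsilon/c_*}$.

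The main obstacle is the exponent bookkeeping in this last step: the inequality $p^{\epsilon}N^{1-2\varepsilon(\delta)}\le N^{-c_*}$ combined with $N\ge p^{\epsilon/c_*}$ effectively forces $c_*\le \varepsilon(\delta)-1/2$, so one needs the saving produced by Corollary~\ref{c:chi_hyp} to exceed the critical value $1/2$. If an off-the-shelf application of the corollary yields $\varepsilon(\delta)\le 1/2$, I would iterate by feeding the improved estimate on $|\Sigma|$ back into the $\T_{2k}(f_{G})$ input of Proposition~\ref{p:chi_inc}, or alternatively run a H\"older-type sum--product chain in the spirit of \cite{bourgain2008expansion}, so as to amplify $\varepsilon(\delta)$ past the critical value and secure an absolute $c_*>0$.
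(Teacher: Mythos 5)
Your reduction of the sum to a hyperbolic incidence count is the right starting point, but the way you feed it into Corollary~\ref{c:chi_hyp} --- with $A_{\mathrm{cor}}=B_{\mathrm{cor}}=\Lambda$ and $X_{\mathrm{cor}}=Y_{\mathrm{cor}}=I$ --- puts you on the wrong side of the trivial bound, and you have correctly diagnosed the symptom yourself: you need $\eps(\d)>1/2$. That requirement cannot be met. The saving $\eps(\d)$ in Corollary~\ref{c:chi_hyp} comes from the energy estimate $\T_{2k}(f_G)$ via Proposition~\ref{p:chi_inc} (the Bourgain--Gamburd machine), and it is a small absolute constant; none of the remedies you sketch (iterating the bound back into $\T_{2k}(f_G)$, or a H\"older sum--product chain) can push it past $1/2$, because the bottleneck is the term $\sqrt{|A||B|}\,|G|\,(\max\{|A|,|B|\})^{-1/2k}$ and the trivial count $|\Lambda|N^2/p$ itself, not a deficiency in the energy input. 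So the final step of your argument genuinely fails.

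The fix is a different decomposition that swaps the roles of the large and small sets. Write $I(i)=h^{-1}(H*I)(i)+\eps(i)$ with $H=[h]$, $h=[\sqrt N]$, so that up to an error supported on $O(h)$ points the sum becomes
\begin{equation*}
h^{-2}\sum_{(a+h)(a'+h')=1}A(a)A(a')H(h)H(h')\chi(a+h)+\mathcal{E}\,.
\end{equation*}
Now Corollary~\ref{c:chi_hyp} is applied with the \emph{full} set $A$ (of size $>p^{1-\epsilon}$) in the role of $A,B$ and the short interval $H$ in the role of $X,Y$. The trivial factor $\sqrt{|A||A|}=|A|$ is then already of the correct order relative to $|A\cap A^{-1}|\gg |A|^2/p\ge |A|p^{-\epsilon}$, and any power saving $h^{-2\eps(\d)}=N^{-\eps(\d)}$ suffices once $N\ge p^{\epsilon/c_*}$ with $c_*\le\eps(\d)/2$; the error $\mathcal{E}$ is controlled by the untwisted Bourgain--Gamburd estimates. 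Separately, your lower bound $|A\cap A^{-1}|\ge |A|^2/(p-1)$ via ``Parseval'' is not valid as written: orthogonality gives $|A\cap A^{-1}|=(p-1)^{-1}\sum_{\psi}\widehat{A}(\psi)\widehat{A}(\overline{\psi})$ on $\F_p^*$, which is not a sum of nonnegative terms, so both the upper and the lower bound $|A\cap A^{-1}|\sim |A|^2/p$ require the same nontrivial incidence/energy input (this is how the asymptotic is obtained, again via the smoothed decomposition), not a one-line positivity argument.
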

\begin{proof} 
    We combine an appropriate version of Corollary \ref{c:chi_hyp} and the well--known Bourgain--Gamburd machine \cite{bourgain2008uniform} applied  to equation \eqref{eq:hyp} see, e.g.,  \cite{MMS_popular}. 
    Indeed, for any $x\in A\cap A^{-1}$, we have $x=i+\la$ such that 
    $1=(i+\la)(i'+\la')$, where $i,i'\in I$ and $\la,\la'\in \Lambda$. 
    Thus we in very deed arrive to equation 
    \eqref{eq:hyp}. Now $I(i) \le N^{-1} (I*\overline{I})(i)$, where $\overline{I} = [-N,N]$ and hence the number of solutions to the equation $1=(i+\la)(i'+\la')$ can be bounded above as $1=(j+a)(j'+a')$ with $a,a'\in A$ and $j,j'\in \overline{I}$ (times $N^{-2}$, of course). 
    In particular (see \cite{MMS_popular} or just Proposition \ref{p:chi_inc} and Corollary \ref{c:chi_hyp} above),  we get for an absolute constant $c \in (0,1]$ that 
\begin{equation}\label{tmp:AA*_upper}
    |A\cap A^{-1}| \le \frac{|A|^2 |\overline{I}|^2}{N^2 p} + O(N^{-2} |A| \cdot N^{2-c}) 
    \ll \frac{|A|^2}{p}
\end{equation}
    and hence the second estimate of \eqref{f:chi_interval} follows from the first one. Here we have used the conditions that $|A|>p^{1-\epsilon}$ and  $N \ge p^{\epsilon/c}$, which is satisfied if we put 
    $c_* = c/4$, say. 

    Similarly, let $h\in [N]$ be an integer parameter and write $I(i) = h^{-1} (H*I)(i)+\eps(i)$, where $H=[h]$ and $\| \eps\|_\infty =1$, $|\supp (\eps)| \le 2h$.
    In particular, we have $\| \eps\|^2_2 \le 2h$ and one can threat $\eps$ as a sum of two functions $\eps_1,\eps_2$ with supports on some  shifts  of the interval $H$. 
    Put $\tilde{\eps} = \eps_1 + \eps_2 : H \to [-1,1]$.
    As always let us write 
\[
    \sum_{x\in A\cap A^{-1}} \chi (x) = 
    \sum_{1=(i+\la)(i'+\la')} \chi(\la+i) \Lambda(\la) \Lambda(\la') I(i) I(i')
\]
\[
    =
    \sum_{1=(i+\la)(i'+\la')} \chi(\la+i) \Lambda(\la) \Lambda(\la') (h^{-1} (H*I)(i)+\eps(i)) (h^{-1} (H*I)(i')+\eps(i'))
\]
\[
    =
    h^{-2} \sum_{a,a',h,h' ~:~ (a+h)(a'+h')=1} A(a) A(a') H(h) H(h') \chi (a+h) + \mathcal{E}
    = \sigma + \mathcal{E} \,,
\]
    where the error term $\mathcal{E}$ can be estimated as (there are better bounds as the set $A$ is $I$--invariant and not just $H$--invariant) 
\[
    |\mathcal{E}| \le 
    2h^{-1} \sum_{a,a',h,h' ~:~ (a+h)(a'+h')=1} \Lambda(a) A(a') 
    |\tilde{\eps}(h)| H(h')
    +
    \sum_{a,a',h,h' ~:~ (a+h)(a'+h')=1} \Lambda(a) \Lambda(a') |\tilde{\eps}(h) \tilde{\eps}(h')|  
\]
\begin{equation}\label{tmp:29.04_2}
    \ll 
    \frac{|A|^2}{p} \left( \frac{h}{N} + \frac{h^2}{N^2} \right) + |A| \cdot  \left( \frac{h^{1-c}}{\sqrt{N}} + \frac{h^{2-c}}{N} \right) 
    \ll 
    \frac{|A|^2 h}{pN} + \frac{|A| h^{1-c}}{\sqrt{N}} \,.
\end{equation} 
    Here we have assumed that $h\le \sqrt{N}$ and applied the well--known Bourgain--Gamburd machine \cite{bourgain2008uniform}, \cite{MMS_popular}.
    Recall that this  result replaces Corollary \ref{c:chi_hyp} in the case when $X$, $Y$ 
    are intervals and $\chi \equiv  1$ (that is why we need two additional main terms in \eqref{tmp:29.04_2}). 
    It remains to estimate the sum $\sigma$ and to do this we can use  
the Bourgain--Gamburd machine one more time, namely, we 
apply our 
Corollary \ref{c:chi_hyp} and get $\sigma \ll |A| h^{-c}$. 
    Finally, combining the estimate for $\sigma$ and  bound \eqref{tmp:29.04_2} for the error term $\mathcal{E}$, choosing the parameter $h=[\sqrt{N}]$, we obtain  
\[
    \sum_{x\in A\cap A^{-1}} \chi (x)
    \ll 
    |A| h^{-c}
    + \frac{|A|^2 h}{pN} + \frac{|A| h^{1-c}}{\sqrt{N}} 
    \ll 
    |A| h^{-c} \ll |A| N^{-c/2} \ll \frac{|A|^2}{p} \cdot N^{-c/4}
\]
    thanks to our assumptions $|A|>p^{1-\epsilon}$ and  $N \ge p^{4\epsilon/c}$.
    The same calculations show that there is an asymptotic formula for $|A\cap A^{-1}|$  and, in particular, the inverse inequality to \eqref{tmp:AA*_upper} takes place. It gives us the first inequality in \eqref{f:chi_interval}.
    This completes the proof. 
$\hfill\Box$
\end{proof}

\bigskip

It is well--known and it is easy to see that the multiplicative equation \eqref{eq:hyp} is 
almost coincides 
(up to some transformation) with  the additive equation  
\[
 \frac{1}{x+a} - \frac{1}{y+b} \equiv 1 \pmod p \,,
\]
where $a\in A$, $b\in B$, $x\in X$, $y\in Y$. 
Thus we obtain an analogue of Theorem \ref{t:chi_interval}. 

\begin{theorem}
    Let $\Lambda \subset \F_p$, $I=[N]$, $A=I \dotplus \Lambda$, $|A|>p^{1-\epsilon}$,  and $\chi$ be a non--principal multiplicative character. Then there is an absolute constant $c_*>0$ such that 
\begin{equation}\label{f:chi_interval+}
    |\sum_{x\in A^{-1} \cap (A^{-1}+1)} \chi (x)|
    \le |A^{-1} \cap (A^{-1}+1)| \cdot  N^{-c_*} 
    \ll \frac{|A|^2}{p} \cdot  N^{-c_*} \,,
\end{equation} 
    provided $N\ge p^{\epsilon/c_*}$.
\label{t:chi_interval+}
\end{theorem}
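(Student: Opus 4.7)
The plan is to mirror the proof of Theorem \ref{t:chi_interval}, replacing the multiplicative hyperbola $(a+x)(b+y)=1$ with its additive counterpart. First I unfold the intersection: $x\in A^{-1}\cap(A^{-1}+1)$ iff there exist $a,a'\in A$ with $x=1/a$ and $x-1=1/a'$, equivalently $\frac{1}{a}-\frac{1}{a'}\equiv 1\pmod p$. Writing $a=i+\lambda$, $a'=i'+\lambda'$ with $i,i'\in I$ and $\lambda,\lambda'\in\Lambda$, I arrive at the equation $\frac{1}{i+\lambda}-\frac{1}{i'+\lambda'}\equiv 1\pmod p$. Solving this for $i'+\lambda'$ as a M\"obius transformation in $i+\lambda$ exhibits it as $g_{i,\lambda}(\cdot)=\cdot$ for an explicit matrix $g_{i,\lambda}\in\SL_2(\F_p)$ (a short computation shows $\det g_{i,\lambda}=1$, as opposed to $\det=-1$ in the multiplicative case). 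Thus the additive equation is preserved by the same $\SL_2$-action, and Proposition \ref{p:chi_inc} applies verbatim to the family $G=\{g_{a,b}\}_{a\in A, b\in B}$, producing the exact analogue of Corollary \ref{c:chi_hyp} for the additive hyperbola.

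Next, I repeat the splitting argument of the proof of Theorem \ref{t:chi_interval}: choose an integer parameter $h\le\sqrt{N}$ and decompose $I(i)=h^{-1}(H*I)(i)+\varepsilon(i)$ with $\|\varepsilon\|_\infty\le 1$ and $|\supp(\varepsilon)|\le 2h$. This splits $\sum_{x\in A^{-1}\cap(A^{-1}+1)}\chi(x)$ into a main term $\sigma$, indexed by quadruples $(a,a',h,h')$ satisfying the additive hyperbolic equation, plus an error $\mathcal{E}$. The main term is estimated by the additive version of Corollary \ref{c:chi_hyp}, giving $\sigma\ll|A|h^{-c}$ for an absolute constant $c\in(0,1]$. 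The two pieces of $\mathcal{E}$, which involve no multiplicative character, are controlled by the Bourgain--Gamburd machine \cite{bourgain2008uniform,MMS_popular} applied to the same $\SL_2$-equation, yielding a bound of the same shape as (\ref{tmp:29.04_2}). Choosing $h=\lfloor\sqrt{N}\rfloor$ gives a total bound $\ll|A|N^{-c/2}$ for the character sum, which converts to $\ll(|A|^2/p)\cdot N^{-c/4}$ via the asymptotic $|A^{-1}\cap(A^{-1}+1)|\sim|A|^2/p$ (itself a byproduct of the same argument with $\chi\equiv 1$). Setting $c_*=c/4$ yields both inequalities of \eqref{f:chi_interval+}.

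The only genuine point to check is that the energy $\T_{2k}(f_G)$ required by Proposition \ref{p:chi_inc} admits the same nontrivial bound in the additive setting as in the multiplicative one. Since $G$ is a subset of $\SL_2(\F_p)$ of the same size and of the same product-set structure (its defining parametrization by $(a,b)\in A\times B$ differs from the multiplicative case only by an inversion of one coordinate), the standard growth results used in \cite{sh_Kloosterman} transfer directly. This is the main technical step, but it introduces no new obstacle, so the argument of Theorem \ref{t:chi_interval} goes through mutatis mutandis.
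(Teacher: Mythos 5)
Your proposal is correct and follows essentially the same route as the paper, which in fact gives even less detail: it merely observes that the additive equation $\tfrac{1}{x+a}-\tfrac{1}{y+b}\equiv 1$ coincides with the multiplicative hyperbola up to a transformation and declares the analogue of Theorem \ref{t:chi_interval}. Your explicit computation that the resulting M\"obius family lies in $\SL_2(\F_p)$ and your rerun of the splitting/Bourgain--Gamburd argument supply exactly the details the paper leaves implicit.
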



Now we are ready to prove Theorem \ref{t:ZM_new} from the introduction. 

\bp 

\begin{proof}
    We follow the scheme and the notation of the proof 
    from paper 
    \cite[Pages 3--7]{MMS_Korobov}. 
    It was shown that the set of $a\in A$ satisfying \eqref{f:ZM_expansion} contains a set of the form 
    $Z_M \cap Z^{-1}_M$, $|A| \sim |Z_M|^2/p$, 
    $|Z_M|\sim p^{w_M+2\eps(1-w_M)}$ and the set $Z_M$ is a disjoint union of some shifts of an interval of length $N\sim p^{2\eps}$, where $\eps\gg 1/M$ is a parameter and Hausdorff dimension $w_M$ enjoys the asymptotic formula $w_M = 1-O(1/M)$, $M\to \infty$. 
    Thus we can apply Theorem \ref{t:chi_interval} and write 
\[
    |A\cap \G| = (p-1)^{-1} \sum_{\chi} \left(\sum_{x\in A} \chi(x) \right)  \left(\sum_{x\in \G} \ov{\chi}(x) \right) \ge  \frac{|A||\G|}{p-1} - C|A| N^{-c_*} >0 \,,
\]
    where $C,c_*>0$ are some absolute constants. 
    Here we have used conditions \eqref{c:Gamma}, \eqref{c:M}, the fact that $M \sim \frac{\log p}{\log \log p}$ and $\eps \gg 1/M$. 
    It remains to check that $N\ge p^{\epsilon/c_*}$ or, in other words, that $\eps \gg \epsilon$. 
    Since $|Z_M|\sim p^{w_M+2\eps(1-w_M)} = p^{1-\epsilon}$, it follows that $\epsilon =(1-w_M)(1-2\eps)\ll 1/M$ and 
    thus the required condition takes place. 
    This completes the proof. 
$\hfill\Box$
\end{proof}

\bp 

    Let us make a final remark. 
    Loosely, 
    Theorem \ref{t:ZM_new} gives us a non--trivial bound for the {\it multiplicative energy} of the set $Z_M$, see formula \eqref{def_tmp:mult_energy} below. 
    Nevertheless, the last fact follows from the circumstance that $Z_M$ is an Ahlfors--David set, \cite{BD_AD_def}, that is 
    for an arbitrary $z\in Z_M$ one has 
\begin{equation}\label{def:AD}
    |Z_M \cap (\mathcal{D}+z)| \sim_M  |\mathcal{D}|^{w_M} N^{1-w_M} 
\end{equation}
    for any interval $\mathcal{D}$, $|\mathcal{D}| \ge N$ with the center at the origin. 
    Recall that 
    in  \cite{BD_AD_def} a non--trivial upper bound was obtained  for the {\it additive energy} of any Ahlfors--David set. 
    Let us briefly 
    prove an upper estimate for the multiplicative energy of 
    an arbitrary 
    Ahlfors--David set $Z_M$, having large Hausdorff dimension $w_M$.
    The advantage of bound \eqref{f:E^*(Z)} below that our power saving can be expressed in terms of $|Z_M|$ but not just $N$.

    Namely, write  $Z=Z_M$, $w=w_M$ and then $|Z| \sim_M p^w N^{1-w}$. 
    Also, put $\delta \sim_M \D^{w} N^{1-w}$, where  
    $\D$ is a parameter.
    By the points/planes incidences in $\F_p$ (see \cite{Rudnev_points/planes}) and property \eqref{def:AD}  one has 
\begin{equation}\label{def_tmp:mult_energy}
    \E^\times (Z) := |\{ (z_1,z_2,z_3,z_4) \in Z^4 ~:~ z_1 z_2 = z_3 z_4 \}| 
\end{equation}
\[
    \ll \d^{-2} |\{ (z_1,z_2,z'_1,z'_2,d,d') \in Z^4 \times [ \D ]^2 ~:~ z_1 (z_2+d) \equiv z'_1 (z'_2+d') \}|
\]
\[
    \ll 
    \d^{-2}
    \left( \frac{|Z|^4 \D^2}{p} + |Z|^3 \D^{3/2}  \right) 
    \ll \d^{-2} p^3  \,,
\]
    where the optimal choice for $\D$ is $\D = (p/|Z|)^2$. 
    Thus 
\begin{equation}\label{f:E^*(Z)}
    \E^\times (Z) \ll_M |Z|^3 (p/|Z|)^{3-4w} N^{-2 (1-w)} 
    \sim_M |Z|^3 \cdot |Z|^{-\frac{(4w-3)(1-w)}{w}} N^{\frac{-(1-w)(3-2w)}{w}} 
    < |Z|^3 
\end{equation}
    for $w >3/4$. 
    Thus, we have a power saving in terms of $|Z|$ for the multiplicative energy of any Ahlfors--David set. 

\section{Data availability and conflicts of interest}

No data was used for the research described in the article.
Ilya D. Shkredov declares no conflicts of interest. 

\bibliographystyle{abbrv}

\bibliography{bibliography}{}

\begin{thebibliography}{10}

\bibitem{ahmadi2007distribution}
O.~Ahmadi and I.~E. Shparlinski.
\newblock {Distribution of matrices with restricted entries over finite
  fields}.
\newblock {\em Indagationes Mathematicae}, 18(3):327--337, 2007.

\bibitem{bardestani2017faithful}
M.~Bardestani, C.~Karimianpour, K.~Mallahi-Karai, and H.~Salmasian.
\newblock {Faithful representations of Chevalley groups over quotient rings of
  non-Archimedean local fields}.
\newblock {\em Groups, Geometry, and Dynamics}, 11(1):57--74, 2017.

\bibitem{bardestani2015quasi}
M.~Bardestani and K.~Mallahi-Karai.
\newblock {Quasi-random profinite groups}.
\newblock {\em Glasgow Mathematical Journal}, 57(1):181--200, 2015.

\bibitem{BD_AD_def}
J.~Bourgain and S.~Dyatlov.
\newblock {Spectral gaps without the pressure condition}.
\newblock {\em Annals of Mathematics}, 187(3):825--867, 2018.

\bibitem{bourgain2008expansion}
J.~Bourgain and A.~Gamburd.
\newblock Expansion and random walks in $\mathrm{SL}_d (\mathbb{Z}/p^n
  \mathbb{Z})$: I.
\newblock {\em Journal of the European Mathematical Society}, 10(4):987--1011,
  2008.

\bibitem{bourgain2008uniform}
J.~Bourgain and A.~Gamburd.
\newblock Uniform expansion bounds for cayley graphs of $\mathrm{SL}_2
  (\mathbb{F}_p)$.
\newblock {\em Annals of Mathematics}, pages 625--642, 2008.

\bibitem{chang2011partial}
M.-C. Chang.
\newblock {Partial quotients and equidistribution}.
\newblock {\em Comptes Rendus Mathematique}, 349(13-14):713--718, 2011.

\bibitem{covert2012geometric}
D.~Covert, A.~Iosevich, and J.~Pakianathan.
\newblock {Geometric configurations in the ring of integers modulo $p^l$}.
\newblock {\em Indiana university mathematics journal}, pages 1949--1969, 2012.

\bibitem{ES}
P.~Erd{\H{o}}s and E.~Szemer{\'e}di.
\newblock {On sums and products of integers}.
\newblock {\em Studies in pure mathematics: to the memory of Paul Tur{\'a}n},
  pages 213--218, 1983.

\bibitem{fish2018product}
A.~Fish.
\newblock On product of difference sets for sets of positive density.
\newblock {\em Proceedings of the American Mathematical Society},
  146(8):3449--3453, 2018.

\bibitem{frobenius1896gruppencharaktere}
G.~Frobenius.
\newblock {{\"U}ber Gruppencharaktere, Sitzungsberichte der K{\"o}niglich
  Preu{\ss}ischen Akademie der Wissenschaften zu Berlin}.
\newblock 1896.

\bibitem{Gamburd_PhD}
A.~Gamburd.
\newblock {On the spectral gap for infinite index “congruence” subgroups of
  $\SL_2 (\Z)$}.
\newblock {\em Israel Journal of Mathematics}, 127(1):157--200, 2002.

\bibitem{Garaev_large}
M.~Garaev.
\newblock {The sum-product estimate for large subsets of prime fields}.
\newblock {\em Proceedings of the American Mathematical Society},
  136(8):2735--2739, 2008.

\bibitem{gyarmati2008equationsII}
K.~Gyarmati and A.~S{\'a}rk{\"o}zy.
\newblock {Equations in finite fields with restricted solution sets. II
  (Algebraic equations)}.
\newblock {\em Acta Mathematica Hungarica}, 119(3):259--280, 2008.

\bibitem{Huxley_quasi}
M.~N. Huxley.
\newblock {Exceptional eigenvalues and congruence subgroups}.
\newblock {\em The Selberg trace formula and related topics (Brunswick, Maine,
  1984)}, 53:341--349, 1986.

\bibitem{iosevich2007erdos}
A.~Iosevich and M.~Rudnev.
\newblock {Erd{\"o}s distance problem in vector spaces over finite fields}.
\newblock {\em Transactions of the American Mathematical Society},
  359(12):6127--6142, 2007.

\bibitem{james1973structure}
D.~James.
\newblock {On the structure of orthogonal groups over local rings}.
\newblock {\em American Journal of Mathematics}, 95(2):255--265, 1973.

\bibitem{kirillov2012elements}
A.~A. Kirillov.
\newblock {\em Elements of the Theory of Representations}, volume 220.
\newblock Springer Science \& Business Media, 2012.

\bibitem{KL_subgroups_book}
P.~B. Kleidman and M.~W. Liebeck.
\newblock {\em The subgroup structure of the finite classical groups}, volume
  129.
\newblock Cambridge University Press, 1990.

\bibitem{klingenberg1961orthogonal}
W.~Klingenberg.
\newblock Orthogonal groups over local rings.
\newblock 1961.

\bibitem{KMS_Kloosterman_gen}
E.~Kowalski, P.~Michel, and W.~Sawin.
\newblock {Stratification and averaging for exponential sums: bilinear forms
  with generalized Kloosterman sums}.
\newblock {\em Annali della Scuola Normale Superiore di Pisa, Classe di
  Scienze}, 21:1453--1530, 2020.

\bibitem{landazuri1974minimal}
V.~Landazuri and G.~M. Seitz.
\newblock On the minimal degrees of projective representations of the finite
  chevalley groups.
\newblock 1974.

\bibitem{MSt}
A.~Mohammadi and S.~Stevens.
\newblock {Attaining the exponent 5/4 for the sum-product problem in finite
  fields}.
\newblock {\em International Mathematics Research Notices}, 2023(4):3516--3532,
  2023.

\bibitem{MMS_popular}
N.~Moshchevitin, B.~Murphy, and I.~Shkredov.
\newblock Popular products and continued fractions.
\newblock {\em Israel J. Math.}, 238(2):807--835, 2020.

\bibitem{MMS_Korobov}
N.~Moshchevitin, B.~Murphy, and I.~Shkredov.
\newblock {On Korobov bound concerning Zaremba’s conjecture}.
\newblock {\em arXiv:2212.14646}, 2022.

\bibitem{moshchevitin2007sets}
N.~G. Moshchevitin.
\newblock {Sets of the form $A+B$ and finite continued fractions}.
\newblock {\em Sbornik: Mathematics}, 198(4):537--557, 2007.

\bibitem{ushanov2010larcher}
N.~G. Moshchevitin and D.~M. Ushanov.
\newblock {On Larcher's theorem concerning good lattice points and
  multiplicative subgroups modulo $p$}.
\newblock {\em Uniform Distribution Theory}, 5(1):45--52, 2010.

\bibitem{MP}
B.~Murphy and G.~Petridis.
\newblock {A point-line incidence identity in finite fields, and applications}.
\newblock {\em arXiv preprint arXiv:1601.03981}, 2016.

\bibitem{nica2017unimodular}
B.~Nica.
\newblock {Unimodular graphs and Eisenstein sums}.
\newblock {\em Journal of Algebraic Combinatorics}, 45:423--454, 2017.

\bibitem{Rudnev_points/planes}
M.~Rudnev.
\newblock {On the number of incidences between points and planes in three
  dimensions}.
\newblock {\em Combinatorica}, 38:219--254, 2018.

\bibitem{RSt}
M.~Rudnev and S.~Stevens.
\newblock {An update on the sum-product problem}.
\newblock In {\em Mathematical Proceedings of the Cambridge Philosophical
  Society}, volume 173, pages 411--430. Cambridge University Press, 2022.

\bibitem{SX}
P.~Sarnak and X.~Xue.
\newblock {Bounds for multiplicities of automorphic representations}.
\newblock {\em Duke Math. J.}, 1991.

\bibitem{seitz1993minimal}
G.~M. Seitz and A.~E. Zalesskii.
\newblock {On the minimal degrees of projective representations of the finite
  Chevalley groups, II}.
\newblock {\em Journal of Algebra}, 158(1):233--243, 1993.

\bibitem{sh_Kloosterman}
I.~D. Shkredov.
\newblock {Modular hyperbolas and bilinear forms of Kloosterman sums}.
\newblock {\em Journal of Number Theory}, 220:182--211, 2021.

\bibitem{s_Fish}
I.~D. Shkredov.
\newblock {On some multiplicative properties of large difference sets}.
\newblock {\em arXiv:2301.09206v1}, 2023.

\bibitem{Shparlinski_Kloosterman}
I.~Shparlinski.
\newblock {On sums of Kloosterman and Gauss sums}.
\newblock {\em Transactions of the American Mathematical Society},
  371(12):8679--8697, 2019.

\bibitem{skorobogatov1992exponential}
A.~N. Skorobogatov.
\newblock {Exponential sums, the geometry of hyperplane sections, and some
  Diophantine problems}.
\newblock {\em Israel Journal of Mathematics}, 80:359--379, 1992.

\bibitem{TV}
T.~Tao and V.~Vu.
\newblock {\em Additive combinatorics}, volume 105 of {\em Cambridge Studies in
  Advanced Mathematics}.
\newblock Cambridge University Press, Cambridge, 2006.

\bibitem{thang2015erdHos}
P.~V. Thang and L.~A. Vinh.
\newblock {Erd{\H{o}}s--R{\'e}nyi graph, Szemer{\'e}di--Trotter type theorem,
  and sum-product estimates over finite rings.}
\newblock {\em Forum Mathematicum}, 27(1):331--342, 2015.

\bibitem{the2022dot}
N.~V. The and L.~A. Vinh.
\newblock {Dot-Product Sets and Simplices Over Finite Rings}.
\newblock {\em Journal of Fourier Analysis and Applications}, 28(2):38, 2022.

\bibitem{vinh2009distribution}
L.~A. Vinh.
\newblock {Distribution of determinant of matrices with restricted entries over
  finite fields}.
\newblock {\em arXiv preprint arXiv:0903.2508}, 2009.

\bibitem{vinh2011szemeredi}
L.~A. Vinh.
\newblock {The Szemer\'{e}di--Trotter type theorem and the sum-product estimate
  in finite fields}.
\newblock {\em European Journal of Combinatorics}, 32(8):1177--1181, 2011.

\bibitem{vinh2015product}
L.~A. Vinh.
\newblock {Product graphs, sum-product graphs and sum-product estimates over
  finite rings}.
\newblock In {\em Forum Mathematicum}, volume~27, pages 1639--1655. De Gruyter,
  2015.

\bibitem{zaremba1972methode}
S.~K. Zaremba.
\newblock {La m{\'e}thode des “bons treillis” pour le calcul des
  int{\'e}grales multiples}.
\newblock In {\em Applications of number theory to numerical analysis}, pages
  39--119. Elsevier, 1972.

\end{thebibliography}

\end{document}